\DeclareMathOperator{\inte}{int}
\newtheorem{assumption}{Assumption}
\newcommand{\auskommentieren}[1]{}
\newcommand{\beq}{\begin{equation}}
\newcommand{\eeq}{\end{equation}}
\newcommand{\bea}{\begin{equation}\begin{aligned}}
\newcommand{\eea}{\end{aligned}\end{equation}}
\newtheorem{theorem}{Theorem}[section]
\newtheorem{lemma}[theorem]{Lemma}
\theoremstyle{definition}
\newtheorem{remark}[theorem]{Remark}
\newtheorem{definition}[theorem]{Definition}
\numberwithin{equation}{section}
\DeclareMathOperator{\dist}{dist}
\title[two-scale method for Monge-Amp\`{e}re type equations]{Definition and certain convergence properties of a
two-scale method for Monge-Amp\`{e}re type equations}
\author{Heiko Kr\"oner}
\address{Universit\"at Duisburg-Essen, Fakult\"at f\"ur Mathematik, Mathematikcarr\'ee, Thea-Leymann-Stra\ss e 9, 45127 Essen, Germany}
\email{heiko.kroener@uni-due.de}
\keywords{Monge-Amp\`{e}re type equation, numerical analysis, convergence, finite elements}
\begin{document}

\begin{abstract}
The  Monge-Amp\`{e}re equation arises in the theory of optimal transport. When more complicated cost functions are involved in the optimal transportation problem, which are motivated e.g. from economics, the corresponding equation 
for the optimal transportation map becomes a Monge-Amp\`{e}re type equation. Such Monge-Amp\`{e}re type equations are a topic of current research from the viewpoint of mathematical analysis. From the numerical point of view there is a lot of current research for the Monge-Amp\`{e}re equation itself and rarely for the more general Monge-Amp\`{e}re type equation. Introducing the notion of discrete $Q$-convexity as well as 
specifically designed  barrier functions this purely theoretical paper extends the very recently studied two-scale method approximation of the Monge-Amp\`{e}re itself \cite{NochettoNtogkasZhang2018} to the more general 
Monge-Amp\`{e}re type equation as it arises e.g. in \cite{PhillippisFigalli2013} in the context of Sobolev regularity.
\keywords{Monge-Amp\`{e}re equation  \and finite elements \and error analyis}
\end{abstract}
\maketitle
\tableofcontents
\section{Introduction}
\label{intro}
The starting point and motivation on the very basic level for our paper is Monge's transportation problem which is formulated in \cite{Monge1781}. Here we recall it by using its formulation 
from the introduction of \cite{FigalliKimMcCann2013}.
Let $0\le f^{-}, f^{+}\in L^1(\mathbb{R}^n)$ be probability densities with respect to the Lebesgue measure $L^n$ on $\mathbb{R}^n$
and $c:\mathbb{R}^n\times \mathbb{R}^n\rightarrow [0, +\infty]$ a cost function. Then Monge's optimal transport problem consists in finding a mapping $G:\mathbb{R}^n\rightarrow \mathbb{R}^n$ which pushes
$d\mu^{+}=f^{+}dL^n$ forward to $d\mu^{-}=f^{-}dL^n$ and which minimizes the expected transportation cost
\begin{equation}
\inf_{G_{\#}\mu^{+}=\mu^{-}} \int_{\mathbb{R}^n}c(x, G(x)) d \mu^{+}(x)
\end{equation}
where $G_{\#}\mu^{+}=\mu^{-}$ means $\mu^{-}[Y]=\mu^{+}[G^{-1}(Y)]$ for each Borel set $Y \subset \mathbb{R}^n$.
It is of interest under which conditions such a map $G$ exists and, furthermore, under which conditions such a map
has a certain classical or Sobolev regularity, for details concerning this we refer to \cite{FigalliKimMcCann2013}
and \cite{PhillippisFigalli2013} and the references to the literature therein. 
Under appropriate assumptions which are not stated here explicitly it turns out that the optimal transportation map $u$ satisfies the following Monge-Amp\`{e}re type equation
\begin{equation} \label{1}
\begin{aligned}
\det \left(D^2u-A(x,Du)\right) = &f \quad \text{in }\Omega  \\
u=& g, \quad \text{on } \partial \Omega
\end{aligned}
\end{equation}
where $\Omega \subset \mathbb{R}^n$ is a bounded open set, $f> c_0$ where $c_0>0$ is a constant,
\begin{equation} \label{positivity}
D^2u-A(x, Du)>0
\end{equation}
in $\Omega$
and 
\begin{equation} \label{A_smooth}
\bar \Omega \times \mathbb{R}^n \ni (x,p) \mapsto A(x,p) \in \mathbb{R}^{n\times n}
\end{equation}
 is a $C^{\infty}$-smooth matrix valued function and $f\in C^0(\bar \Omega)$, $g\in C^0(\partial \Omega)$. 
 Note that the assumed regularity for $A,f$ and $g$ is not minimal from the point of view of mathematical analysis but still quite low as well as challenging and interesting for a first approach with numerical analysis. In the next section we will present the most important examples of cost functions 
 and derive in these special cases some properties for $A$ which result from these special cases.
 While basically working with a general $A$ in our paper we will for the sake of simplicity assume that $A$ satisfies these latter assumptions, see Section \ref{section2}. 

Note that \eqref{1} reduces to the classical  Monge-Amp\`{e}re equation when $A=0$.

As a survey and without claiming completeness we give the following list of references concerning approximation schemes for the Monge-Amp\`{e}re equation \cite{1,2,3,4,5,6,7,8,9,11,12,13}. We are not aware of any works about the finite element approximation error analysis for the Monge-Amp\`{e}re type equation (\ref{1}) with $A\neq 0$.

The first purpose of our paper is to adapt the two-scale method definition from \cite{NochettoNtogkasZhang2018} to 
a modified  two-scale method for an approximation of the Monge-Amp\`{e}re type equation
\eqref{1}. This is not completely straightforward and unique and we make an appropriate choice for the definition.
Second and mainly we show convergence of the discrete solutions defined by our two-scale method to the solution
of \eqref{1} when the two discrete parameters go to zero as well as their quotients satisfy certain bounds. For it we make certain (regularity) assumptions for the solution
of \eqref{1}, cf. Remark \ref{regularity_assumption}. 
Basically the convergence proof is achieved following the strategy from \cite{NochettoNtogkasZhang2018} by using suitable barriers and comparison principles. 
The main advance and crucial difference of our paper from \cite{NochettoNtogkasZhang2018} is that  we design completely new and much more complicated barrier functions. Apart from the barriers themselves the arguments are much more involved  since we have to handle the terms arising from $A$. This becomes especially obvious from the fact that we have only a so called $\mod O(h)$ uniqueness in the comparison principle on the discrete level which is still non-trivial. Furthermore, our convergence result for the convergence of the discrete solutions to the solution of the original problem
 is different since we require certain regularity assumptions for the solution (Remark \ref{regularity_assumption}). We are aware that this assumption is strong from the view point of viscosity solutions and a kind of artificial. We are also aware that there are basically a large variety of numerical methods 
 which are candidates which can be tested for our equation. This includes also methods
 working with regularization methods in order to achieve better regularity.  The perspective and challenge of our paper is to test the beautiful theory developed in \cite{NochettoNtogkasZhang2018} 
 in this more general case and to see how far that is possible. Hereby we include, of course, as main tool the barriers modeled on the exponential function instead of quadratic functions as in \cite{NochettoNtogkasZhang2018}. The exponential function as helping function is a very common tool in maths in general but here it should be analyzed what can be achieved by leaving the quite restricted class of quadratic functions. 
 
 Our paper is organized as follows. In section \ref{section2}
 we introduce the assumptions on the cost function and discuss the two most important cases.
 In section \ref{section3} we define our two-scale method. In section \ref{section4}
 we derive a discrete comparison principle and uniqueness for the discrete equation $\mod O(h)$.
 In section \ref{section5} we show existence of a discrete solution.
 In sections \ref{section6} and \ref{section7} we present some auxiliary facts. 
 In section \ref{section8} we study the convergence properties of the discrete model.

\section{Assumptions on the cost function and the setting in general}\label{section2}
Here we first recall the setting from \cite{PhillippisFigalli2013} and \cite{FigalliKimMcCann2013} concerning the general setup for the optimal transport problem. Then we specify the cost function and derive further properties for the matrix function $A$ in these specific cases. These motivate further assumptions for the matrix function $A$ (in addition to those from the above mentioned and below described general setup) which we will assume throughout the paper.

The general setting in  \cite{PhillippisFigalli2013} and \cite{FigalliKimMcCann2013} is motivated from the applications and the purpose to achieve 
certain regularity properties. We will present these assumptions in the following and will afterwards discuss the two most important examples of cost functions, especially it turns out that the corresponding matrices $A$ for these examples are smooth.
Let $X\subset \mathbb{R}^n$ be an open set and $u: X\mapsto \mathbb{R}$
 be a $c$-convex function, i.e., $u$ can be written as
 \begin{equation}
 u(x) = \max_{y\in \bar Y}\{-c(x,y)+\lambda_y\}
\end{equation}
for some open set $Y\subset \mathbb{R}^n$ and $\lambda_y\in \mathbb{R}$
for all $y\in \bar Y$. We are going to assume that $u$ is an Alexandrov solution
of (\ref{1}) inside some open set $\Omega \subset X$, i.e.,
\begin{equation}
|\partial^cu(E)| = \int_E f \quad \forall E\subset \Omega \quad \text{Borel},
\end{equation}
where 
\begin{equation}
\partial^cu(E) := \bigcup_{x\in E}\partial^cu(x), \quad \partial^cu(x):= 
\{y\in \bar Y:u(x)=-c(x,y)+\lambda_y\}
\end{equation}
and $|F|$ denotes the Lebesgue measure of a set $F$. For $y\in \bar Y$ we define the contact set 
\begin{equation}
\Lambda_y:= \{x\in X: u(x) = -c(x,y)+\lambda_y\}.
\end{equation}
Let $O\subset \subset Y$ be an open neighborhood of $\partial^cu(\Omega)$.
We define 
\begin{equation}
|||c|||:= \|c\|_{C^3(\bar \Omega \times \bar O)}+\|D_{xxyy}c\|_{L^{\infty}(\bar \Omega\times \bar O)}
\end{equation}
and assume
\begin{enumerate}
\item  $|||c|||< \infty $ 
\item  For every $x\in \Omega$ and $p:= -D_xc(x,y)$ with $y\in O$  it holds that 
\begin{equation}
D_{p_lp_k}A_{ij}(x,p)\xi_i\xi_j\eta_k\eta_l\ge 0 \quad \forall \xi, \eta \in \mathbb{R}^n, \quad \xi \cdot \eta =0
\end{equation}
where $A$ is defined through $c$ by
\begin{equation}
A_{ij}(x,p):= -D_{x_ix_j}c(x,y).
\end{equation}
\item 
For 
every $(x,y)\in \Omega \times O$ the maps $x \in \Omega\mapsto -d_yc(x,y)$
and $y \in O \mapsto -D_xc(x,y)$ are diffeomorphisms on their respective ranges.
\end{enumerate}

Special choices for the cost function arise from the applications, for a motivation of such choices in an economical context we refer to \cite{FigalliKimYoung-HeonMcCann2011}.
Nevertheless, the two most relevant special cases for the cost function $c$ are the following functions $c=c_1$ and $c=c_2$, cf. 
\cite{FigalliKimMcCann2013}, for which we will derive
the mapping $A$ explicitly, namely
\begin{equation}
c_1(x,y)=\frac{1}{2}|x-y|^2 \quad \text{and} \quad c_2(x,y)= -\log |x-y|.
\end{equation}
For $c=c_1$ we have
\begin{equation}
D_{x}c= x-y, \quad p=y-x
\end{equation}
and hence
\begin{equation}
A_{ij}(x,y-x)=-I,
\end{equation}
or, equivalently,
\begin{equation}
A_{ij}(x, \xi)=-I\quad \forall \xi.
\end{equation}
For $c=c_2$ we have
\begin{equation} 
\begin{aligned}
D_xc=&  -\frac{x-y}{|x-y|^2}, \\
D_{x_ix_j}c=& 2\frac{(x_i-y_i)(x_j-y_j)}{|x-y|^4}-\frac{\delta_{ij}}{|x-y|^2} \\
p=& -D_xc = \frac{x-y}{|x-y|^2}
\end{aligned}
\end{equation}
and hence
\begin{equation}
\begin{aligned}
A_{ij}\left(x, \frac{x-y}{|x-y|^2}\right) =& \frac{\delta_{ij}}{|x-y|^2}-2\frac{(x_i-y_i)(x_j-y_j)}{|x-y|^4},
\end{aligned}
\end{equation}
or, equivalently, 
\begin{equation}
A_{ij}(x, \xi) = |\xi|^2\delta_{ij}-2\xi_i\xi_j \quad \forall \xi.
\end{equation}
In these special cases the following assumption is valid.

\begin{assumption}\label{ass1}
$A$ 
is $C^{\infty}$-smooth and in addition there holds 
\begin{equation} \label{general_assumptions}
A(x,0) =0 \quad \forall x \in \bar \Omega \quad \text{or} \quad A=-I.
\end{equation}
\end{assumption}
Motivated by these two special cases and since we need such properties for technical reasons we will assume throughout the paper that 
Assumption \ref{ass1} holds.


\section{Definition of the two-scale method for Monge-Amp\`{e}re type equations} \label{section3}
 In this section we adapt the definition of the two-scale method from
 \cite{LiNochetto2018} to the more general equation \eqref{1}.
 Let $T_h=\{T_1, ..., T_N\}$, $h>0$, be a shape-regular and quasi-uniform mesh consisting of closed simplices $T_i$, $i=1, ..., N$, of diameter $ch$ where here and in the following $c$ denotes a generic constant which may vary from line to line.
We furthermore denote
\begin{equation}
\Omega_h = \inte \left(\bigcup_{i=1}^{N}T_i\right),
\end{equation}  
let $N_h$ be the nodes of $T_h$ and write $N_h^b=\{x_i\in N_h: x_i \in \partial \Omega_h\}$
for the boundary nodes and $N_h^0=N_h\setminus N_h^b$ for the interior nodes. We furthermore assume that 
$\Omega$ is convex, that $N_h^b\subset \partial \Omega$ and denote the space of continuous 
functions on $\Omega_h$, which are linear on $T_i$ for every $i=1, ..., N$, by $V_h$.
We denote the set of $n\times n$ matrices of real numbers by $\mathbb{R}^{n\times n}$ and the subset of orthogonal matrices by $O(n)$, furthermore, we write elements $V \in \mathbb{R}^{n\times n}$ by $V=(v_j)_{j=1}^d$ where $v_j$ are the columns of $V$ with respect to the standard basis in $\mathbb{R}^n$.
  
We denote the unit sphere in $\mathbb{R}^n$ by $S$ and for $\theta>0$ we let $S_{\theta}$
be a finite subset of $S$ with the property that
\begin{equation}
\forall\ v \in S� \quad  \exists\ v_{\theta}\in S_{\theta} : \quad |v-v_{\theta}| \le \theta.
\end{equation}
Especially, we may assign to an element $V=(v_j)\in O(n)$ a matrix $V=(v_j^{\theta})$ where $v^{\theta}_j=(v_j)_{\theta}$ and denote the set of 
all such matrices by $O^{\theta}(n)$. Note that $O^{\theta}(n) \not \subset O(n)$ in general.

In addition to the meshsize $h$ which will serve as the fine scale in the remaining part of the paper we introduce in the following a coarse scale $\delta>h$ as a second discrete parameter which will serve as step size in difference quotients defining discrete derivatives.
For $x_i \in N_h^0$ let
\begin{equation}
\delta_i = \min\{\delta, \dist(x_i, \partial \Omega_h)\}
\end{equation}
and note that $\delta_i \ge ch$ where $c$ does not depend on $h$ and that $B(x_i, \delta_i)\subset \Omega_h$. Here, $B(x_i, \delta_i)$ denotes the open ball of radius $\delta_i$ around $x_i$.
For $w \in C^0(\overline{\Omega_h})$ we define the one-sided first order difference operator
\begin{equation}
\nabla_{\delta}w(x_i, v_j)=\frac{w(x_i+\delta_iv_j)-w(x_i)}{\delta_i}
\end{equation}
and the centered second order difference operator
\begin{equation} \label{2.1}
\nabla^2_{\delta}w(x_i; v_j)=\frac{w(x_i+\delta_iv_j)-2w(x_i)+w(x_i-\delta_iv_j)}{\delta_i^2}
\end{equation}
for $x_i\in N_h^0$ and $v_j \in S_{\theta}$. 
Here we choose one-sidedness in the definition for the first order difference operator but remark that centered differences might also work. Altogether we have three discrete parameters which we will summarize as
\begin{equation}
\varepsilon=(h, \delta, \theta)
\end{equation}
where $h, \delta, \theta>0$ and $\delta >h$. To the two latter inequalities 
we will sometimes refer to by writing $\varepsilon>0$.
For the following we will fix $\varepsilon$ for a while and will analyze the corresponding discrete model. 
Then later in a second step we will discuss the limit $\varepsilon \rightarrow 0$
and a necessary coupling between the parameters in order to achieve convergence
of the solutions of the discrete equations to the solution of the original equation.

In the following definition we generalize the two-scale operator from \cite{LiNochetto2018}.

\begin{definition}\label{defi_1}
For $x_i \in N_h^0$ we define for any $w_h\in V_h$
\begin{equation}\label{equ_1}
\begin{aligned}
T_{\varepsilon}[w_h](x_i):=&\min_{v^{\theta}\in O^{\theta}(n) }
\Big(\prod_{j=1}^d\left(\nabla_{\delta}^2w(x_i, v^{\theta}_j)-(v^{\theta}_j)^TA(x_i, \nabla_{\delta}w(x_i, e_k))v^{\theta}_j\right)^{+}\\
&-\sum_{j=1}^d\left(\nabla_{\delta}^2w(x_i, v^{\theta}_j)-(v^{\theta}_j)^TA(x_i, \nabla_{\delta}w(x_i, e_k))v^{\theta}_j\right)^{-}
\Big)
\end{aligned}
\end{equation}
where two remarks are in order concerning our notation. Firstly, we write $(\cdot)^{+}=\max(\cdot, 0)$ and $(\cdot)^{-}=-\min(\cdot, 0)$ to denote the non-negative and non-positive part of $(\cdot)$, respectively. Secondly,
we abbreviate 
\begin{equation}
\nabla_{\delta}w(x_i, e_k):=\left(\nabla_{\delta}w(x_i, e_k)\right)_{k=1}^d
\end{equation}
where $w\in V_h$, $\delta>h$, $x_i\in N_h^0$ and $(e_k)_{k=1}^d$ denotes the canonical basis in $\mathbb{R}^d$ to simplify the notation in expression (\ref{equ_1}).
\end{definition}

By using the discrete two-scale operator from Definition \ref{defi_1} we obtain the following discrete version of the
Monge-Amp\`{e}re type problem (\ref{1}). 

\begin{definition}
For a given  (triple) $\varepsilon>0$  a two-scale method solution of  (\ref{1}) is a function
 $u_{\varepsilon}\in V_h$ such that $u_{\varepsilon}(x_i)=g(x_i)$
for all $x_i \in N^b_h$ and 
\begin{equation} \label{discreteMongeAmpere}
T_{\varepsilon}[u_{\varepsilon}](x_i):= f(x_i)
\end{equation}
for all $x_i \in N^0_h$.
\end{definition}

In view of the widely used convention in numerical analysis to denote discrete solutions
with the subscript $h$, i.e. $u_h$, we will write in the following ocassionally $u_h$ instead of $u_{\varepsilon}$.

\section{Discrete $Q$-convexity, monotonicity and discrete comparison principle $\mod O(h)$}
\label{section4}
To simplify the notation we use the following conventions. Firstly, in the setting from Definition \ref{defi_1} we will abbreviate 
in the following
\begin{equation} \label{def_Q}
Q(x_i, v_j)=Q_w(x_i, v_j)=\nabla_{\delta}^2w(x_i, v_j)-(v_j)^TA(x_i, \nabla_{\delta}w(x_i, e_k))v_j
\end{equation}
so that (\ref{equ_1}) takes the form
\begin{equation}\label{equ_2}
\begin{aligned}
T_{\varepsilon}[w_h](x_i)=&\min_{v^{\theta}\in O^{\theta}(n) }
\Big(\prod_{j=1}^d\left(Q(x_i, v^{\theta}_j)\right)^{+}
-\sum_{j=1}^d\left(Q(x_i, v^{\theta}_j)\right)^{-}
\Big).
\end{aligned}
\end{equation}
Secondly, when a variable ranges in a discrete set we sometimes emphasize this fact by adding a superscript to this variable
which is linked to this discrete set, e.g. we write $v^{\theta} \in O^{\theta}(n)$ and $w_h \in V_h$
but equally $v\in O^{\theta}(n)$ and $w \in V_h$, respectively.
Throughout this section we assume that (the triple) $\varepsilon>0$ is fixed.
\begin{definition}  \label{definition_1}
We say that $w_h \in V_h$ is discretely $Q$-convex if
\begin{equation} \label{equ_2_}
Q(x_i; v_j) \ge 0 \quad \forall x_i \in N^0_h, \quad \forall v_j \in O^{\theta}(n).
\end{equation}
\end{definition}
Note, that discrete $Q$-convexity of $w_h$ does not imply convexity of $w_h$ in general.
\begin{lemma} \label{lemma_1}
If $w_h \in V_h$ satisfies 
\begin{equation}
T_{\varepsilon}[w_h](x_i)\ge 0 \quad \forall x_i \in N_h^0,
\end{equation}
then $w_h$ is discretely $Q$-convex and as a consequence
\begin{equation}
T_{\varepsilon}[w_h](x_i) = \min_{v\in O^{\theta}(n)}\prod_{j=1}^dQ(x_i, v_j).
\end{equation}
\end{lemma}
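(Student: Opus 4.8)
The plan is to prove the contrapositive-flavored dichotomy directly from the structure of $T_\varepsilon$ in the form \eqref{equ_2}. Fix $x_i \in N_h^0$ and $v^\theta \in O^\theta(n)$ and abbreviate $q_j := Q(x_i, v^\theta_j)$ for $j = 1, \dots, d$. The quantity being minimized in \eqref{equ_2} is $P(v^\theta) := \prod_{j=1}^d (q_j)^+ - \sum_{j=1}^d (q_j)^-$. First I would observe the elementary fact that if at least one $q_j$ is negative, then the corresponding $(q_j)^+ = 0$, so the product term vanishes and $P(v^\theta) = -\sum_{j=1}^d (q_j)^- \le -(q_j)^- < 0$; conversely, if all $q_j \ge 0$, then all negative parts vanish and $P(v^\theta) = \prod_{j=1}^d q_j \ge 0$. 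Hence for each fixed $v^\theta$ we have the equivalence: $P(v^\theta) \ge 0$ if and only if $q_j \ge 0$ for all $j$, and in that case $P(v^\theta) = \prod_{j=1}^d q_j$.

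Next I would use this pointwise-in-$v^\theta$ statement to handle the minimum. Suppose $T_\varepsilon[w_h](x_i) = \min_{v^\theta} P(v^\theta) \ge 0$. Then for every $v^\theta \in O^\theta(n)$ we have $P(v^\theta) \ge 0$, so by the equivalence just established $Q(x_i, v^\theta_j) \ge 0$ for every column $v^\theta_j$; since $v^\theta$ ranges over all of $O^\theta(n)$, this is exactly the defining condition \eqref{equ_2_} for discrete $Q$-convexity of $w_h$ at $x_i$. Doing this for every $x_i \in N_h^0$ gives discrete $Q$-convexity of $w_h$. Moreover, once $Q(x_i, v^\theta_j) \ge 0$ for all $v^\theta$ and all $j$, the second half of the equivalence gives $P(v^\theta) = \prod_{j=1}^d Q(x_i, v^\theta_j)$ for every $v^\theta$, and taking the minimum over $v^\theta \in O^\theta(n)$ on both sides yields the claimed identity
\[
T_\varepsilon[w_h](x_i) = \min_{v \in O^\theta(n)} \prod_{j=1}^d Q(x_i, v_j).
\]

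There is essentially no hard obstacle here; the argument is a careful unwinding of the definition of $T_\varepsilon$ and of the positive/negative part decomposition. The only point that requires a little care is making sure the quantifier manipulation is clean: the hypothesis gives $\min_{v^\theta} P(v^\theta) \ge 0$, which must be turned into "$P(v^\theta) \ge 0$ for each $v^\theta$" (immediate, since the minimum over a nonempty finite set dominates nothing below it) before the per-$v^\theta$ equivalence can be applied columnwise. A minor bookkeeping subtlety is that the columns $v^\theta_j$ of a matrix in $O^\theta(n)$ are exactly the vectors that appear in the definition of discrete $Q$-convexity, so no vectors in $S_\theta$ are missed or double-counted; this is built into the way $O^\theta(n)$ was defined, so it needs only be noted in passing.
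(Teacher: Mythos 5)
Your proof is correct and follows essentially the same approach as the paper: unwinding the positive/negative part decomposition of the expression inside the minimum to observe that a single negative factor $Q(x_i,v_j)$ kills the product and leaves a strictly negative sum, forcing $P(v^\theta)<0$. Your version is actually tighter than the paper's in one respect --- you apply the dichotomy to \emph{every} $v^\theta \in O^\theta(n)$ (which is what the definition of discrete $Q$-convexity requires), whereas the paper's written argument fixes the minimizing $v$ only and leaves the extension to arbitrary $v^\theta$ implicit.
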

\begin{proof}
We distinguish two cases depending on whether
$T_{\varepsilon}[w_h](x_i)>0$ or not. Let $v=(v_j)_{j=1}^d\in O^{\theta}(n)$ be a $d$-tuple that realizes
the minimum in the definition of $T_{\varepsilon}[w_h](x_i)$ and note that
\begin{equation}
\prod_{j=1}^dQ(x_i; v_j)^{+}\ge 0, \quad \sum_{j=1}^dQ(x_i; v_j)^{-}\ge 0.
\end{equation}

(i) Assume that $T_{\varepsilon}[w_h](x_i)>0$. 
The expression $T_{\varepsilon}[w_h](x_i)$ is defined as a product, cf. (\ref{equ_2}), so that its positivity implies
the positivity of all its factors. These positive factors are differences of type $a-b$ of non-negative numbers $a$ and $b$ so that we also always necessarily have $a>0$. This implies that each quantity $Q(x_i; v_j)^{+}$ is also positive and hence the sum-term in (\ref{equ_2}) vanishes.

(ii) Assume that $T_{\varepsilon}[w_h](x_i)=0$. Using again the representation from (\ref{equ_2})
of this expression as a difference of a product and a sum we make the following conclusion. 
If this product is positive then by (i) the sum vanishes and hence the product and the sum vanish so that $Q(x_i; v_j)= 0$ and the claim follows as well.
\end{proof}

Note that Lemma \ref{lemma_1} and Definition \ref{definition_1} make formally sense when in (\ref{equ_1}) and (\ref{equ_2}) the superscript $\theta$ is omitted
 and Lemma  \ref{lemma_1} is then even also true.

We need a definition in which we introduce a family of subspaces of $V_h$.
\begin{definition} \label{Definition_V_h}
For $\Lambda, h>0$ we define
\begin{equation} \label{def_V_h}
\begin{aligned}
V_h^{\Lambda} = \left\{
v_h \in V_h: \exists \eta \in C^{\infty}(\bar \Omega) , I_h\eta = v_h,
\|\eta\|_{C^2(\bar \Omega)}\le \Lambda\right\}
\end{aligned}
\end{equation} 
where $I_h$ denotes the usual Lagrange interpolation operator.
\end{definition}
In the course of the paper it will turn out that when fixing a sufficiently large $\Lambda>0$
all considerations can be done (and will be done) for the sequence of discrete spaces $(V_h^{\Lambda})_{h>0}$.
Here, $\Lambda$ will be chosen depending on the data of the problem, i.e. 
depending on $A$, $g$, $f$, $\Omega$ and the uniform (with respect to $h$)
parameters of the triangulation. Interestingly, 
we only bound derivatives up to the {\it second} order in the definition of $V_h^{\Lambda}$. So arguments solely based on interpolation do not work hence they require at least bounds for the third derivative of the interpolating function.

\begin{remark} \label{remark_1}
For the sake of a simplier notation we will write in the following again $V_h$ instead of 
$V_h^{\Lambda}$ with $\Lambda$ large and fixed. We will comment on $\Lambda$ where necessary.
\end{remark}

As a consequence of Remark \ref{remark_1} we have the following discrete version of the 
fact that the derivative of a differentiable function vanishes in interior extremal points. Let $v_h\in V_h$, $v_j\in S_{\theta}$
and $z\in N_h^0$ be a maximum (or a minimum) of $v_h$ then
\begin{equation}\label{rolle}
\nabla_{\delta}v_h(z, v_j) = O(h).
\end{equation}

In the next lemma we show that $T_{\varepsilon}$ is monotone$\mod\ O(h)$. 

\begin{lemma} \label{lemma10}
Let $u_h, w_h\in V_h$ be discretely $Q$-convex. If $u_h-w_h$
attains a maximum at an interior node $z \in N_h^0$ then
\begin{equation}
T_{\varepsilon}[w_h]\ge T_{\varepsilon}[u_h]+O(h)
\end{equation}
in $\Omega_h$.  Here, the constant hidden in the $O(h)$-notation depends on $\Lambda$.
\end{lemma}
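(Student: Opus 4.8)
The strategy is the usual one for proving degenerate-elliptic monotonicity of finite difference operators, but adapted to the two-scale setting and to the error terms coming from $A$. Since $u_h - w_h$ attains a maximum at $z \in N_h^0$, we have $u_h(z) - w_h(z) \ge u_h(x) - w_h(x)$ for all $x \in \overline{\Omega_h}$, in particular at the stencil points $z \pm \delta_z v_j$. Rearranging gives $w_h(z \pm \delta_z v_j) - 2 w_h(z) + w_h(z \mp \delta_z v_j) \ge u_h(z \pm \delta_z v_j) - 2 u_h(z) + u_h(z \mp \delta_z v_j)$ after summing the two sign choices, hence $\nabla^2_\delta w_h(z, v_j) \ge \nabla^2_\delta u_h(z, v_j)$ for every direction $v_j$. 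So the pure second-difference part is monotone with no error at all.

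\textbf{Handling the $A$-term.} The difference between $Q_{w_h}(z, v_j)$ and $Q_{u_h}(z, v_j)$ is
\begin{equation}
Q_{w_h}(z,v_j) - Q_{u_h}(z,v_j) = \left(\nabla^2_\delta w_h(z,v_j) - \nabla^2_\delta u_h(z,v_j)\right) - v_j^T\left(A(z, \nabla_\delta w_h(z,e_k)) - A(z, \nabla_\delta u_h(z,e_k))\right) v_j.
\end{equation}
The first bracket is $\ge 0$ by the previous paragraph. The second bracket is the obstruction: $A$ is evaluated at the discrete gradients $\nabla_\delta w_h(z,e_k)$ and $\nabla_\delta u_h(z,e_k)$, and at the maximum point $z$ these need not agree. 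However, by \eqref{rolle} applied to $v_h := u_h - w_h$ we have $\nabla_\delta u_h(z,e_k) - \nabla_\delta w_h(z, e_k) = O(h)$ for each $k$, so the two gradient arguments of $A$ differ by $O(h)$. Since $A$ is $C^\infty$-smooth (Assumption \ref{ass1}) and the relevant gradients stay in a fixed compact set — here is where the $V_h^\Lambda$ restriction of Remark \ref{remark_1} enters, bounding $\|\nabla_\delta w_h\|_\infty$ and $\|\nabla_\delta u_h\|_\infty$ in terms of $\Lambda$ — the Lipschitz bound on $A$ gives $|A(z, \nabla_\delta w_h(z,e_k)) - A(z, \nabla_\delta u_h(z,e_k))| \le C(\Lambda)\,h$, and therefore $Q_{w_h}(z, v_j) \ge Q_{u_h}(z, v_j) - C(\Lambda) h$ for every $v_j \in S_\theta$ and hence for every orthonormal-up-to-$\theta$ frame $v^\theta = (v^\theta_j) \in O^\theta(n)$.

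\textbf{Passing through the nonlinear operator.} It remains to propagate the pointwise inequality $Q_{w_h}(z, v^\theta_j) \ge Q_{u_h}(z, v^\theta_j) - Ch$ through the min–product–sum structure of $T_\varepsilon$. Both $u_h$ and $w_h$ are discretely $Q$-convex, so all the quantities $Q_{u_h}(z, v^\theta_j)$ and $Q_{w_h}(z, v^\theta_j)$ are $\ge 0$; by Lemma \ref{lemma_1} (or rather the $Q$-convexity hypothesis directly) the negative-part sums vanish and $T_\varepsilon[w_h](z) = \min_{v^\theta} \prod_j Q_{w_h}(z, v^\theta_j)$, similarly for $u_h$. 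Fix the frame $\bar v^\theta$ realizing the minimum for $w_h$; then
\begin{equation}
T_\varepsilon[w_h](z) = \prod_{j=1}^d Q_{w_h}(z, \bar v^\theta_j) \ge \prod_{j=1}^d \left(Q_{u_h}(z, \bar v^\theta_j) - Ch\right) \ge \prod_{j=1}^d Q_{u_h}(z, \bar v^\theta_j) - C'h \ge T_\varepsilon[u_h](z) - C'h,
\end{equation}
where in the middle step I expand the product and use that each factor $Q_{u_h}(z,\bar v^\theta_j)$ is bounded above by $C(\Lambda)$ (again via the $V_h^\Lambda$ bound, which controls $\nabla^2_\delta u_h$ through the $C^2$ norm — note the second differences, not third, suffice here because we are only bounding, not estimating consistency error) to absorb the cross terms, and in the last step I use that $\bar v^\theta$ is merely one competitor in the minimum defining $T_\varepsilon[u_h](z)$. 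Since $z$ is the maximum point of $u_h - w_h$ and the operator values at other interior nodes are not constrained, the inequality $T_\varepsilon[w_h] \ge T_\varepsilon[u_h] + O(h)$ is to be read at $z$; I expect the statement "in $\Omega_h$" to mean it holds with the single uniform constant $C'(\Lambda)$ which is what the argument produces. The main obstacle is the bookkeeping in the expand-the-product step — making sure every cross term is genuinely $O(h)$ with a constant depending only on $\Lambda$ (and the fixed data), which is exactly why the passage to $V_h^\Lambda$ was set up beforehand — together with checking that the discrete gradients feeding into $A$ remain in the compact set on which $|||c||| < \infty$ and the smoothness of $A$ are available.
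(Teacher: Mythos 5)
Your proof takes essentially the same route as the paper's: second differences are monotone at the interior maximum, and the $A$-contribution is $O(h)$ via \eqref{rolle}, the uniform gradient bound built into Definition \ref{Definition_V_h}, and the smoothness of $A$; you also spell out the passage through the min-of-products that the paper leaves implicit. The one small wrinkle is that the factor-wise step $\prod_j Q_{w_h}(z,\bar v^\theta_j)\ge\prod_j\bigl(Q_{u_h}(z,\bar v^\theta_j)-Ch\bigr)$ can fail when two or more right-hand factors are negative, so replace $Q_{u_h}-Ch$ by its positive part $(Q_{u_h}-Ch)^{+}$ (using $Q_{w_h}\ge 0$ from discrete $Q$-convexity), which still differs from $\prod_j Q_{u_h}$ by only $O(h)$.
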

\begin{proof}
If $u_h-w_h$ attains a maximum at $z \in N_h^0$ then
\begin{equation} \label{equ1}
u_h(z)-w_h(z) \ge u_h(x_i)-w_h(x_i) \quad \forall x_i \in N_h.
\end{equation}
Since $u_h$ and $w_h$ are piecewise linear this inequality can be generalized to
\begin{equation}
u_h(z)-w_h(z) \ge u_h(x)-w_h(x) \quad \forall x \in \Omega_h.
\end{equation}
Especially, evaluating this for difference quotients gives in view of  
(\ref{2.1}) that
\begin{equation} \label{equ10}
\nabla_{\delta}^2u_h(z, v_j)\le \nabla_{\delta}^2w_h(z, v_j) \quad \forall v_j \in S_{\Theta}.
\end{equation}
It remains to show that
\begin{equation}
Q_{u_h}(z, v_j) \le Q_{w_h}(z, v_j)+O(h)
\end{equation}
which can be reduced by (\ref{equ10}) to
\begin{equation} \label{equ11}
(v_j)^TA(z, \nabla_{\delta}w_h(z, e_k))v_j \le (v_j)^TA(z, \nabla_{\delta}u_h(z, e_k))v_j+O(h).
\end{equation}
But this follows since
\begin{equation}
\nabla_{\delta}w_h(z, e_k) -\nabla_{\delta}u_h(z, e_k)=O(h), 
\end{equation}
cf. (\ref{rolle}),
and
\begin{equation}
|\nabla_{\delta}w_h(x_i, e_k)|\le C,
\end{equation}
cf. Definition \ref{Definition_V_h},
from the continuity of $A$.
\end{proof}

We will use the following notation.
\begin{remark}
Given two functions $f_1=f_1(x,h)$ and $f_2=f_2(x,h)$ where $x\in S$
ranges in a certain parameter set $S$ as well as the discretization parameter 
$h>0$ we write 
\begin{equation}
f_1 \le f_2 \text{ for all } x\in S \mod O(h)
\end{equation}
if there exists a constant $C>0$ which does not depend on $h$, $f_1$ or $f_2$ such that
\begin{equation}
f_1(x,h) \le f_2(x,h) + Ch \text{ for all }x\in S  \text{ and all } h>0.
\end{equation}
When the parameter set $S$ is clear from the context we will not mention it explicitly.
\end{remark}
We show the following discrete comparison principle, cf. Lemma \ref{comparison}. Note that the inequality in the lemma includes the error term $O(h)$. This linear error order is not trivial in the context of a nonlinear second order operator and the use of first order finite elements for the following  reason. If one derives the inequality in the following lemma firstly via a well-known comparison principle on the continuous level and later on transfers this by using interpolation estimates to the discrete level then usually third derivatives appear. But according to Definition \ref{Definition_V_h} third derivatives of the interpolating functions may be arbitrary large and hence there appears an error term which is of the size of the product of the third derivative of an artificial interpolating function and $h$ and hence possibly large and especially larger than $O(h)$.

\begin{lemma} \label{comparison}
Let $u_h, w_h \in V_h$ with $u_h\le w_h$ on the boundary $\partial \Omega_h$ be such that
\begin{equation}
T_{\varepsilon}[u_h](x_i) \ge T_{\varepsilon}[w_h](x_i) > 0 \quad \forall x_i \in N_h^0. \label{equ12}
\end{equation}
Then we have $u_h \le w_h$ in $\Omega_h$$\mod O(h)$.
\end{lemma}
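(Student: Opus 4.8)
The strategy is the classical one for discrete comparison principles: suppose the conclusion fails, locate an interior node where the violation is "worst", and derive a contradiction from the monotonicity established in Lemma~\ref{lemma10} together with the strict positivity hypothesis \eqref{equ12}. Concretely, I would argue by contradiction: assume that $\sup_{\Omega_h}(u_h-w_h) > Ch$ for every constant $C$ (i.e.\ the bound $u_h \le w_h \mod O(h)$ fails). Since $u_h \le w_h$ on $\partial\Omega_h$ and $u_h - w_h$ is continuous on the compact set $\overline{\Omega_h}$, the positive maximum of $u_h-w_h$ is attained at some interior node $z \in N_h^0$ (piecewise linearity moves the max to a node, and the boundary is excluded because there $u_h-w_h\le 0$, well below the assumed large positive value). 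Both $u_h$ and $w_h$ are discretely $Q$-convex: for $w_h$ this follows from \eqref{equ12} via Lemma~\ref{lemma_1}, and for $u_h$ likewise since $T_\varepsilon[u_h](x_i)\ge T_\varepsilon[w_h](x_i)>0$. Hence Lemma~\ref{lemma10} applies at $z$ and yields
\begin{equation}
T_\varepsilon[w_h](z) \ge T_\varepsilon[u_h](z) + O(h).
\end{equation}

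Now I would combine this with \eqref{equ12}, which says $T_\varepsilon[u_h](z) \ge T_\varepsilon[w_h](z)$, to get $T_\varepsilon[w_h](z) \ge T_\varepsilon[w_h](z) + O(h)$, i.e.\ $0 \ge O(h)$ — but this on its own is not yet a contradiction, since $O(h)$ can be negative. This is where the genuine work lies: the inequality from Lemma~\ref{lemma10} must be sharpened at the \emph{point of maximum} so that the error term can be absorbed. The key observation should be that, because $z$ is a strict-enough interior maximum of $u_h - w_h$ (strict by the assumed size of the maximum relative to $h$), one can actually show the two-scale operators differ by a definite positive amount, not merely up to $O(h)$. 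More precisely, I expect one needs to use that at the maximum node the second difference inequality \eqref{equ10} is strict with a quantitative gap, or alternatively to perturb $w_h$ by adding a small multiple of a strictly discretely-$Q$-convex barrier (built from the exponential-type functions advertised in the introduction) so that the perturbed function $\tilde w_h = w_h + \kappa\phi$ still satisfies $T_\varepsilon[\tilde w_h] > 0$ but now strictly dominates: the barrier contributes a positive amount to $Q_{\tilde w_h}$ at every node, hence to $T_\varepsilon[\tilde w_h]$, which overwhelms the $O(h)$ slack. Letting $\kappa \to 0$ after the limiting argument recovers the claim for $w_h$ itself.

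I would therefore structure the proof as: (1) reduce to the contradiction hypothesis and extract the interior maximum node $z$; (2) verify discrete $Q$-convexity of both functions and invoke Lemma~\ref{lemma10}; (3) introduce the barrier perturbation to convert the $\mod O(h)$ monotonicity into a strict inequality of operator values at $z$; (4) contradict \eqref{equ12}. The main obstacle is unquestionably step~(3): one must show the barrier can be chosen with $C^2$-norm controlled by $\Lambda$ (so that $\tilde w_h$ stays in the admissible class $V_h^\Lambda$ up to adjusting constants), strictly discretely $Q$-convex uniformly in $\varepsilon$, and contributing a per-node increment to $T_\varepsilon$ that dominates the constant hidden in Lemma~\ref{lemma10}'s $O(h)$ — and crucially that the sign of the correction runs the right way given that $T_\varepsilon$ is a product of $d$ factors, so the increment to the product is at least the product of the increments, which requires a lower bound on the factors $Q_{w_h}(z,v_j)$ themselves. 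Handling the $A$-term inside $Q$ when perturbing, since $A$ depends nonlinearly on the discrete gradient, is the technical nuisance that distinguishes this from the $A=0$ case in \cite{NochettoNtogkasZhang2018}.
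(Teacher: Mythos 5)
Your plan correctly identifies the skeleton — reduce to an interior maximum node of $u_h-w_h$, invoke the $\bmod\ O(h)$ monotonicity of Lemma~\ref{lemma10}, observe that this alone is compatible with the hypothesis and yields no contradiction, and then introduce a strictly $Q$-convex barrier to create room. That last observation (the $O(h)$ term can be negative, so one needs a quantitative cushion) is exactly the crux, and you see it clearly. You also correctly flag the two genuine technical issues: lower bounds on the individual factors $Q_{w_h}(z,v_j)$ in the product, and the nonlinear dependence of $A$ on the discrete gradient under perturbation.

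However, the perturbation you propose runs in the wrong direction and, as stated, would not close the argument. You propose $\tilde w_h = w_h + \kappa\phi$ with $\phi$ strictly discretely $Q$-convex, claiming the barrier "contributes a positive amount to $T_\varepsilon[\tilde w_h]$ which overwhelms the $O(h)$ slack." But if $T_\varepsilon[\tilde w_h] = T_\varepsilon[w_h] + \Delta$ with $\Delta>0$, then combining Lemma~\ref{lemma10} at an interior maximum of $u_h-\tilde w_h$ with the hypothesis \eqref{equ12} gives $T_\varepsilon[w_h]+\Delta \ge T_\varepsilon[u_h]+O(h) \ge T_\varepsilon[w_h]+O(h)$, i.e.\ $\Delta \ge O(h)$ — no contradiction, since $\Delta>0$ anyway. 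Making the supersolution's operator larger cannot contradict the hypothesis that the subsolution's operator is already $\ge$ it. The paper instead perturbs the \emph{subsolution}: it sets $q_h = I_h q$ with $q(x)=e^{\lambda|x-\tilde x|^2}-R\le 0$ on $\bar\Omega$ and considers $u_h+\alpha q_h$. Since $q_h\le 0$, the boundary inequality $u_h+\alpha q_h\le w_h$ on $\partial\Omega_h$ persists; since $q$ is strictly convex with second derivatives of order $\lambda^2 e^{\lambda|\cdot|^2}$ dominating the first-order $A$-contribution, one shows $\frac{d}{d\alpha}T_\varepsilon[u_h+\alpha q_h]_{|\alpha=0}\ge \mu_0>0$, hence $T_\varepsilon[u_h+\alpha q_h]>T_\varepsilon[w_h]+C_1h$ for $\alpha\sim C_4h$. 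This is what makes the maximum-node contradiction genuine. Moreover, the paper is not a pure reductio: it splits into two cases — case (i) assumes $\min_v\prod Q_{u_h}-C_1h>\min_v\prod Q_{w_h}$ and derives the contradiction directly at the max node; case (ii) assumes the opposite, builds the barrier, reduces to case (i), and extracts the explicit estimate $u_h\le w_h+C_4C_5h$ from $\|q_h\|_\infty\le C_5$ and $\alpha\ge C_4h$. Your "suppose the bound fails for every $C$" framing gives the qualitative $\bmod\ O(h)$ statement but not this quantitative constant, and your alternative suggestion of sharpening the second-difference inequality at the max node to a "quantitative gap" is not available in general — there is no such gap without the barrier.
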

\begin{proof}
Since $u_h, w_h \in V_h$, it suffices to prove $u_h(x_i)\le w_h(x_i)$ for all $x_i \in N_h^0$.
In view of Lemma \ref{lemma_1} we may write inequality (\ref{equ12}) as
\begin{equation} \label{equ13}
\min_{v\in O^{\theta}(n)}\prod_{j=1}^dQ_{u_h}(x_i, v_j) \ge  \min_{v\in O^{\theta}(n)}\prod_{j=1}^dQ_{w_h}(x_i, v_j)> 0 \quad  \forall x_i \in N_h^0.
\end{equation}
Now we distinguish cases. For it we fix  constants $C_1, C_2>0$ which depend only on the data of the problem, i.e. on $A$, $f$, $\Omega$, and which will be specified later.

(i) Let us assume
\begin{equation} \label{equ13_}
\min_{v\in O^{\theta}(n)}\prod_{j=1}^dQ_{u_h}(x_i, v_j) -C_1 h> \min_{v\in O^{\theta}(n)}\prod_{j=1}^dQ_{w_h}(x_i, v_j) \quad  \forall x_i \in N_h^0.
\end{equation}
 We argue by contradiction and assume that there is $x_k \in N_h^0$ such that
\begin{equation} \label{equ20}
u_h(x_k)-w_h(x_k)=\max_{x_i\in N_h^0}u_h(x_i)-w_h(x_i)> 0.
\end{equation}
Similarly, as in Lemma \ref{lemma10} we conclude that
\begin{equation}
Q_{u_h}(x_k, v_j) \le Q_{w_h}(x_k, v_j)+C_3 h \quad \forall v_j \in S_{\theta}
\end{equation}
where $C_3>0$ is a suitable constant which depends only on the data of the problem (and especially not on $h$).
Taking the product on both sides and after this the infimum on the left-hand side of the equation
yields
\begin{equation}
\min_{v\in O^{\theta}(n)}\prod_{j=1}^dQ_{u_h}(x_k, v_j)\le \prod_{j=1}^dQ_{w_h}(x_k, \tilde v_j) +C_2h \quad \forall \tilde v \in O^{\theta}(n)
\end{equation}
where $C_2$ is a suitable constant which depends only on $C_3$ and the data of the problem.
W.l.o.g. we may also take the infimum over all $\tilde v\in O^{\theta}(n)$ on the right-hand side of the inequality.

Combining this with nequality  
(\ref{equ13_}) we obtain a contradiction provided $C_1$ is sufficiently large compared to $C_2$. This finishes case (i).

(ii) Let us assume the other case, i.e. we have
\begin{equation} \label{equ14_}
\min_{v\in O^{\theta}(n)}\prod_{j=1}^dQ_{u_h}(x_i, v_j) -C_1 h  \le  \min_{v\in O^{\theta}(n)}\prod_{j=1}^dQ_{w_h}(x_i, v_j) \quad  \forall x_i \in N_h^0
\end{equation} 
where now $C_1$ is fixed as it turned out to be necessary  in case (i). 

The strategy of the proof will now be as follows.
We show that there are constants $h_0, C_4, C_5>0$ and an auxiliary function $q_h \in V_h$ which depend on the data of the problem such that 
\begin{equation} \label{consequence_barrier}
\begin{aligned}
T_{\varepsilon}[u_h+\alpha q_h](x) > T_{\varepsilon}[w_h](x)+C_1h \\
\forall 0<h<h_0 \quad \quad \forall \alpha \ge C_4h \text{ sufficiently small}
\end{aligned}
\end{equation}
and
\begin{equation}
\|q_h\|_{L^{\infty}(\Omega)}\le C_5.
\end{equation}
From this we conclude by using (i) that
\begin{equation}
u_h \le w_h + C_4C_5h
\end{equation}
and hence the claim.

We choose  $\tilde x \in \mathbb{R}^n$ such that $\dist(\tilde x, \bar \Omega)\ge 1$,
 $\lambda>0$ large and define the
strictly convex function
\begin{equation} \label{ansatz_function1}
q(x) = e^{\lambda |x-\tilde x|^2}-R
\end{equation}
where $R=R(\lambda, \Omega)$ is so that $q\le 0$ in $\Omega$ and especially in $\bar \Omega_h$.
We now define 
\begin{equation}
q_h=I_hq,
\end{equation}
perform some relevant calculations on the level of $q$ instead of $q_h$ and translate them to $q_h$ afterwards by using the standard interpolation estimate
\begin{equation} \label{interpolation}
\|q_h-q\|_{C^m(\Omega_h)} \le c_{m,r} h^r \|q\|_{C^{m+r}(\bar \Omega)}, \quad m,r \in \mathbb{N},
\end{equation}
where $c_{m,r}$ are suitable constants.
We have
\begin{equation}
\begin{aligned}
D_iq(x) =& 2\lambda (x_i-\tilde x_i) e^{\lambda |x-\tilde x|^2} \\
D_iD_jq(x) =& 2\lambda e^{\lambda|x-\tilde x|^2}\delta_{ij}+4 \lambda^2e^{\lambda |x-\tilde x|^2}(x_i-\tilde x_i)(x_j-\tilde x_j).
\end{aligned}
\end{equation}
For $x \in N_h^0$ and $v \in O^{\theta}(n)$ we calculate
\begin{equation} \label{representation_Q}
\begin{aligned}
Q_{u_h+\alpha q}(x, v_r) =& 
\nabla_{\delta}^2u_h(x, v_r)+\alpha \nabla_{\delta}^2q(x, v_r) \\
& - v_r^TA(x, \nabla_{\delta}u_h(x, e_k)+\alpha \nabla_{\delta}q(x, e_k))v_r \\
=& \nabla_{\delta}^2u_h(x, v_r)+\alpha\left(O(\delta)+D_{v_r}D_{v_r}q(x)\right)\\
&-v_r^TA(x, \nabla_{\delta}u_h(x, e_k)+\alpha D_kq(x)+\alpha O(\delta))v_r \\
=& \nabla_{\delta}^2u_h(x, v_r)+\alpha O(\delta) + 2 \alpha \lambda e^{\lambda |x-\tilde x|^2}\delta_{ij}v_{ri}v_{rj} \\
&+4 \alpha \lambda^2e^{\lambda |x-\tilde x|^2}(x_i-\tilde x_i)(x_j-\tilde x_j)v_{ri}v_{rj} \\
&-v_r^TA(x, \nabla_{\delta}u_h(x, e_k)+ 2 \alpha\lambda (x_k-\tilde x_k)e^{\lambda|x-\tilde x|^2}+\alpha O(\delta))v_r
\end{aligned}
\end{equation}
Here, the constant hidden in the $O(\delta)$-notation depends on $q$, more precisely, on its higher order derivatives.
Since $T_{\varepsilon}[u_h]=:f>0$ there is $\alpha_0>0$ such that
\begin{equation} \label{infimum_attained}
T_{\varepsilon}[u_h+\alpha q_h]>0
\end{equation}
for all $\alpha \in (0, \alpha_0)$. Hence for these $\alpha$ we have
\begin{equation}
T_{\varepsilon}[u_h+\alpha q_h](x) = \min_{v \in O^{\theta}(d)}\prod_{j=1}^dQ_{u_h+\alpha q_h}(x, v_j)
\end{equation}
and therefore by abbreviating $Q(\alpha, j)=Q_{u_h+\alpha q_h}(x, v_j)$ (where the arguments $x$ and $v_j$ are assumed to be implicitly clear from the context)
for $\alpha \in (0, \alpha_0)$ and $j\in \{1, ..., d\}$ we write
\begin{equation} \label{deriv_of_prod}
\begin{aligned}
\frac{d}{d\alpha}T_{\varepsilon}[u_h&+\alpha q_h](x)_{|\alpha=0} \\
=& \sum_{j=1}^dQ(0, 1)...
Q(0, j-1)\frac{d}{d\alpha}Q(\alpha, j)_{|\alpha=0}Q(0, j+1)...Q(0, d).
\end{aligned}
\end{equation}
Note that the arguments $x$ and $v$ which appear here implicitly on the right-hand side are chosen 
obviously - namely as on the left-hand side of the equation as far as $x$ is concerned; the matrix $v$ is chosen so that in the point $x$ the infimum is attained in the definition (\ref{infimum_attained}).
In order to evaluate (\ref{deriv_of_prod}) we first calculate the derivative of the expression in (\ref{representation_Q}). Observe that there holds for all $k \in \{1, ..., d\}$ that
\begin{equation}
\begin{aligned}
\frac{d}{d\alpha}Q(\alpha, k)
=& O(\delta)+2 \lambda e^{\lambda |x-\tilde x|^2}\delta_{ij}v_{ik}v_{jk} \\
&+ 4 \lambda^2e^{\lambda |x-\tilde x|^2}(x_i-\tilde x_i)(x_j-\tilde x_j)v_{ki}v_{kj} 
\\
&-v_k^T \left(\frac{\partial A}{\partial p_l}(x, \nabla_{\delta}u_h(x, e_k))2 \lambda(x_l-\tilde x_l)e^{\lambda |x-\tilde x|^2}+O(\delta)\right)v_k.
\end{aligned}
\end{equation}
Assuming that $\theta$ is sufficiently small we have
\begin{equation}
(x_i-\tilde x_i)(x_j-\tilde x_j)v_{ki}v_{kj} \ge \frac{1}{2}|x-\tilde x|^2
\end{equation}
for all $k \in \{1, ..., d\} \setminus \{k_0\} $ and all $v \in O^{\theta}(d)$
where $k_0=k_0(v)\in \{1, ..., d\}$ is suitable.
Now having $\Lambda$ fixed in the definition of $V^{\Lambda}_h$ and choosing $0<h_0\le 1$ (at the moment not further specified)  we may assume that
\begin{equation}
\frac{d}{d\alpha}Q(\alpha, k)_{|\alpha=0} \ge \lambda^2e^{\lambda |x-\tilde x|^2}|x-\tilde x|^2>0
\end{equation}
provided $\lambda>0$ is sufficiently large.
Furthermore, for $\alpha_0=\alpha_0(\lambda)>0$ sufficiently small the quantities $Q( \alpha, k)$ are uniformly 
with respect to $k$ and with respect to $\alpha \in (-\alpha_0, \alpha_0)$ bounded by a positive constant from below.
Hence we arrive at
\begin{equation}
\frac{d}{d\alpha}T_{\varepsilon}[u_h+\alpha q_h](x)_{|\alpha=0}\ge \mu_0>0
\end{equation}
with a suitable fixed $\mu_0>0$.
An expansion of $T_{\varepsilon}[u_h+\alpha q_h](x)$ around 0 yields the existence of $\alpha \in (0, \alpha_0)$
such that
\begin{equation}
T_{\varepsilon}[u_h+\alpha q_h](x)>T_{\varepsilon}[u_h](x)+ \frac{\alpha}{2}\mu_0.
\end{equation} 
Clearly, by assuming that $h_0$ is sufficiently small the above construction shows that we can realize property
(\ref{consequence_barrier}) with this specific $\alpha$. Actually, we have in addition to choose $C_4>0$
but as long it is not too large it does not matter how we choose it. This finishes the proof.
\end{proof}

\section{Existence of discrete solutions} \label{section5}
We now prove uniqueness$\mod\ O(h)$ and existence  
of a discrete solution $u_{\varepsilon}\in V_h$ of (\ref{discreteMongeAmpere}).
Here, the uniqueness $\mod O(h)$ means, that given two discrete solutions 
$u^1_{\varepsilon}, u^2_{\varepsilon}$ of (\ref{discreteMongeAmpere}) there holds $u^{i}_{\varepsilon}\le u^{j}_{\varepsilon}$ $\mod O(h)$ for all $i,j\in \{1,2\}$.

\begin{lemma}
There exists $u_{\varepsilon}\in V_h$ which satisfies the discrete Monge-Amp\`{e}re type equation (\ref{discreteMongeAmpere}) and which is unique$\mod O(h)$. Furthermore, $\|u_{\varepsilon}\|_{L^{\infty}(\Omega)}$ does not depend on
the parameter $\varepsilon=(h, \delta, \theta)$.
\end{lemma}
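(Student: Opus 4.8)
The plan is to obtain a discrete solution by Perron's method, in the spirit of \cite{NochettoNtogkasZhang2018}, viewing (\ref{discreteMongeAmpere}) as a finite-dimensional system with one scalar equation $T_\varepsilon[u_\varepsilon](x_i)=f(x_i)$ per interior node $x_i\in N_h^0$ and with the boundary values prescribed by $g$, and then to read off uniqueness and the $L^\infty$-bound from Lemma \ref{comparison} and a couple of barriers. Uniqueness$\mod O(h)$ is immediate: if $u^1_\varepsilon,u^2_\varepsilon\in V_h$ both solve (\ref{discreteMongeAmpere}) they agree with $g$ on $N_h^b$ and $T_\varepsilon[u^i_\varepsilon](x_i)=f(x_i)\ge c_0>0$, so Lemma \ref{comparison} applied with the two possible assignments of the roles of $u_h,w_h$ yields $u^1_\varepsilon\le u^2_\varepsilon$ and $u^2_\varepsilon\le u^1_\varepsilon$, both $\mod O(h)$.

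For the $L^\infty$-bound one barrier suffices on each side. Fixing $x_0$ with $\bar\Omega\subset B(x_0,R)$, the ``fat'' quadratic $\underline w(x)=\min_{\partial\Omega}g+\frac{\mu}{2}(|x-x_0|^2-R^2)$ obeys $\underline w\le g$ on $\partial\Omega$ and, for $\mu$ large depending only on the data, $T_\varepsilon[I_h\underline w](x_i)\ge c\,\mu^d>\sup_{\bar\Omega}f$ on $N_h^0$ with $c>0$ dimensional (here one uses that each second difference $\nabla_\delta^2 I_h\underline w$ stays bounded below by a fixed positive multiple of $\mu$, the interpolation error coming with a geometric constant $<1$, even near $\partial\Omega_h$ where $\delta_i\sim h$); hence $\underline w\le u_\varepsilon\mod O(h)$ by Lemma \ref{comparison}, giving a lower bound. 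For an upper bound one needs a supersolution, i.e. a function with $0<T_\varepsilon[\cdot]\le f$ on $N_h^0$ and boundary values $\ge g$: if $A\equiv-I$ a shifted, slightly concave quadratic $\bar w(x)=K-\frac{1-\kappa}{2}|x-x_0|^2$ with $0<\kappa\le\inf_{\bar\Omega}f$ and $K$ large does this, and in the case $A(x,0)=0$ one argues with a suitably scaled convex (or exponential-type) function, the point being that $T_\varepsilon$ of a constant \emph{vanishes}, so one needs a genuine strictly positive perturbation of the second differences while keeping $A(x_i,\nabla_\delta \cdot)$ small; Lemma \ref{comparison} then yields $u_\varepsilon\le\bar w\mod O(h)$. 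Since $I_h\underline w,I_h\bar w\in V_h$ once $\Lambda$ in Definition \ref{Definition_V_h} is fixed large enough and their $L^\infty$-norms are $\varepsilon$-independent, $\|u_\varepsilon\|_{L^\infty(\Omega)}$ is $\varepsilon$-independent; alternatively, once $u_\varepsilon\in V_h=V_h^\Lambda$ is known this is clear from $\|I_h\eta\|_{L^\infty}\le(1+ch^2)\Lambda$.

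It remains to produce a solution. Put $\mathcal S_\varepsilon=\{w_h\in V_h:w_h\le g\text{ on }N_h^b,\ T_\varepsilon[w_h](x_i)\ge f(x_i)\ \forall x_i\in N_h^0\}$; then $I_h\underline w\in\mathcal S_\varepsilon$, and by Lemma \ref{comparison} against $I_h\bar w$ every element of $\mathcal S_\varepsilon$ is bounded above by an $\varepsilon$-independent constant. Define $u_\varepsilon(x_i)=\sup\{w_h(x_i):w_h\in\mathcal S_\varepsilon\}$ for $x_i\in N_h$ and interpolate. Because $T_\varepsilon[\cdot](x_i)$ depends continuously on the finitely many nodal values and the minimum over the finite set $O^\theta(n)$ respects the relevant suprema, one gets $u_\varepsilon\in\mathcal S_\varepsilon$; and if $T_\varepsilon[u_\varepsilon](x_k)>f(x_k)$ at some $x_k\in N_h^0$, raising $u_\varepsilon$ slightly at $x_k$ keeps it in $\mathcal S_\varepsilon$ — only the finitely many difference stencils through $x_k$ are affected and, by monotonicity of $T_\varepsilon$ in that nodal value as in the proof of Lemma \ref{lemma10}, they stay $\ge f$ for a small enough increase — contradicting maximality. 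Hence $T_\varepsilon[u_\varepsilon]=f$ on $N_h^0$. (A Leray--Schauder/degree argument homotoping $A$ to $0$, for which existence is available from \cite{NochettoNtogkasZhang2018}, together with the above a priori bound, is an equivalent route.)

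The step I expect to be the main obstacle is reconciling this construction with the restricted space $V_h^\Lambda$ (Remark \ref{remark_1}) and with the fact that Lemma \ref{comparison} is only a comparison $\mod O(h)$. The supremum defining $u_\varepsilon$, as well as the bumped competitors, need not be nodal interpolants of $C^2$-functions of norm $\le\Lambda$, so one has either to prove a discrete regularity statement guaranteeing that the Perron solution does admit such a reconstruction — here the assumed regularity of the continuous solution (cf. Remark \ref{regularity_assumption}) and its proximity to $u_\varepsilon$ should be used — or to carry out the whole argument on a different, $V_h^\Lambda$-compatible solution map; one also has to keep track of the accumulating $O(h)$ slacks so that the node equations hold \emph{exactly}, which causes no trouble since (\ref{discreteMongeAmpere}) is imposed exactly and Lemma \ref{comparison} is only invoked to sandwich $u_\varepsilon$. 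A secondary technical point, relevant to the uniformity of the bound, is the degeneration $\delta_i\sim h$ of the difference length near $\partial\Omega_h$, which makes interpolation errors in $\nabla_\delta^2$ of the barriers only $O(1)$ there; this is absorbed by endowing the barriers with a definite sign margin or by handling the near-boundary nodes separately via $|u_\varepsilon-g|=O(h)$.
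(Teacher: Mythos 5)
Your overall strategy coincides with the paper's: existence by a Perron-type construction, uniqueness$\mod O(h)$ from Lemma \ref{comparison}, and $\varepsilon$-independent $L^\infty$-bounds from discrete barriers, with the same case split $A\equiv-I$ versus $A(x,0)=0$ and essentially the same concave quadratic upper barrier in the $A\equiv-I$ case. The paper's Perron step is a nodewise sweep producing a monotone increasing sequence $u_h^k$ whose pointwise limit solves the node equations exactly, while you take a supremum over a set $\mathcal S_\varepsilon$ of subsolutions and argue by bumping; these are equivalent in spirit, and your remark that a Leray--Schauder homotopy in $A$ would also work is a legitimate alternative route not in the paper.

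The one place where your plan has a real gap is the construction of the starting subsolution with the \emph{exact} boundary data. You put a fat quadratic $\underline w$ into $\mathcal S_\varepsilon$ with only $\underline w\le g$ on $N_h^b$, so the nodal supremum need not equal $g$ at boundary nodes — either you must require $w_h=g$ on $N_h^b$ in $\mathcal S_\varepsilon$, in which case $I_h\underline w\notin\mathcal S_\varepsilon$ and $\mathcal S_\varepsilon$ could a priori be empty, or you must supply a boundary barrier showing the supremum attains $g$. This is precisely what the paper's step (i)(b) is for: it builds $u^0_\varepsilon=I_h(w+q)$ where $q$ is the exponential barrier \eqref{ansatz_function1} and $w$ solves a Dirichlet problem on an auxiliary domain $\tilde\Omega$ whose boundary passes through $N_h^b$ with data $g-q$ chosen so that, via Schauder estimates, $\|w\|_{C^2}=O(\lambda e^{\lambda|x-\tilde x|^2})$ is subordinate to $D^2q=O(\lambda^2 e^{\lambda|x-\tilde x|^2})$; this gives both $u^0_\varepsilon=I_hg$ on $N_h^b$ \emph{and} $T_\varepsilon[u^0_\varepsilon]\ge f$ for $\lambda$ large. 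Your proposal simply does not produce such a function, so the existence part is incomplete as written. Two of your flagged concerns, by contrast, are on target: the $V_h^\Lambda$-compatibility of the Perron iterate is glossed over in the paper as well (hidden in Remark \ref{remark_1}), and the observation that a constant upper barrier gives $T_\varepsilon=0$ so Lemma \ref{comparison} with its strict positivity hypothesis does not apply verbatim is a genuine looseness in the paper's step (iii) (though for $A(x,\cdot)=0$ a discretely convex function attains its max on the boundary, so the conclusion stands by a more elementary maximum-principle argument).
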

\begin{proof}
Let us fix $\varepsilon>0$.
The uniqueness$\mod O(h)$ of a solution of (\ref{discreteMongeAmpere}) follows from Lemma \ref{comparison}.
Hence it remains to show existence. For it we construct 
a special monotone sequence of discretely $Q$-convex subsolutions $\{u_{\varepsilon}^k\}_{k=0}^{\infty}$ of (\ref{discreteMongeAmpere}) from which we will select a subsequence which converges to the desired discrete solution of 
(\ref{discreteMongeAmpere}). The construction is by induction and works as follows.

(i) {\it Claim: There is $u_h^0 \in V_h$ such that $u_h^0=I_hg$ on $\partial \Omega_h$ and 
\begin{equation}
T_{\varepsilon}[u_h^0](x_i)\ge f(x_i) \quad \forall x_i \in N_h^0.
\end{equation}
}  

{\it Proof of the claim:}

(a) We give a short proof in the case that there is $C^{2,\alpha}$-regularity of the solution available. Let us assume that there is $0<\alpha<1$ such that the problem (\ref{1}) with right-hand side $f$ replaced by $f+1$ has a solution $u \in C^{2,\alpha}(\bar \Omega)$. Setting $u_h^0=I_hu$ and using the interpolation estimates from \cite[Lemma 4.1]{NochettoNtogkasZhang2018} as well as the continuity of $A$ yields the claim.

(b) In the general case we proceed without using (regularity of) the solution of (\ref{1}).
Let $q$ denote the auxiliary function from \eqref{ansatz_function1} with arbitrary choice of $R$, e.g. $R=0$. Let $w$
be a smooth function in a ball $B_{L}(0)$, $L>0$ large, let us say $2\bar \Omega \subset B_L(0)$, with
\begin{equation}
w(x_i)=g(x_i)-q(x_i), \quad x_i \in N_h^b.
\end{equation}
Such a function can easily be obtained by fixing it in $N_h^b$ and then extending it as smooth function to $B_L(0)$. 
But we would like to have that the size of $|Dw(x)|$ and $|D^2w(x)|$ is of order $ O(\lambda e^{\lambda |x-\tilde x|^2})$
and hence small compared to the order $O(\lambda^2 e^{\lambda |x-\tilde x|^2})$ which is the size of $D^2q(x)$. For it we define
an artificial domain $\tilde \Omega \subset \mathbb{R}^d$ with smooth boundary $\partial \tilde \Omega$ passing through all elements 
of $N_h^b$, i.e. $N_h^b \subset \partial \tilde \Omega$.
We extend $g-q$ from $N_h^b$ to a function $b\in C^{1, \alpha}(\partial \tilde \Omega)$ with
\begin{equation} \label{representation}
\|b\|_{C^{1, \alpha}_x(\partial \tilde \Omega)} \le \mu_x\lambda^2 e^{\lambda |x-\tilde x|^2}
\end{equation}
for all $x \in \partial \tilde \Omega$ where we may and will choose here the constant 
\begin{equation}
0<\mu_x<\mu_0
\end{equation}
with $\mu_0>0$ small. Here, we denote
\begin{equation}
\|b\|_{C^{1, \alpha}_x(\partial \tilde \Omega)} = |b(x)| + \sum_{i=1}^d\|D_ib\|_{C^{0,\alpha}_x(\partial \tilde \Omega)}
\end{equation}
where
\begin{equation}
\|D_ib\|_{C^{0,\alpha}_x(\partial \tilde \Omega)} = |D_ib(x)|+\sup_{y \in \partial \tilde \Omega, y \neq x}\frac{|D_ib(x)-D_ib(y)|}{|x-y|^{\alpha}}, \quad x \in \partial \tilde \Omega.
\end{equation}
To give derivatives (and their norms) of a function being defined on the hypersurface  $\partial \tilde \Omega$ a sense
we either consider these with respect to a fixed finite selection of local coordinate systems covering $\partial \tilde \Omega$
or with respect to an arbitrary but fixed extension to an open neighborhood of $\partial \tilde \Omega$ of the corresponding functions.
In order to understand how the representation (\ref{representation}) is  possible, we explain this for the most non-trivial 
case, i.e. on the level of the H\"older norm of the derivative. Given a small choice for $\mu_x>0$, we  estimate
for $i\in \{1, ..., d\}$ and $x,y \in \partial \tilde \Omega$ that
\begin{equation}
\begin{aligned}
\frac{|D_ib(x)-D_ib(y)|}{|x-y|^{\alpha}} =& \frac{|D_ib(x)-D_ib(y)|}{|x-y|^{\alpha}|x-y|^{1-\alpha}}|x-y|^{1-\alpha} \\
\approx& D^2 q(x) |x-y|^{1-\alpha} \\
\le& D^2q(x) \mu_x^{1-\alpha}
\end{aligned}
\end{equation}
if $|x-y|\le \mu_x$. In the other case, i.e. when $|x-y|>\mu_x$, we estimate
\begin{equation}
\begin{aligned}
\frac{|D_ib(x)-D_ib(y)|}{|x-y|^{\alpha}} \le&
\frac{|D_ib(x)|+|D_ib(y)|}{\mu_x^{\alpha}} \\
\le& O\left(\frac{\lambda}{\mu_x}e^{\lambda |x-\tilde x|^2}\right).
\end{aligned}
\end{equation}
Then we solve the Dirichlet problem
\begin{equation}
\begin{aligned}
\Delta w =&0 \quad \text{in } \tilde \Omega \\
w =&b \quad \text{on } \partial \tilde \Omega
\end{aligned}
\end{equation}
and obtain by classical PDE-theory a solution $w \in C^{3, \alpha}\left(\overline{\tilde \Omega}\right)$ which satisfies
the Schauder-estimate
\begin{equation} \label{schauder}
\|w\|_{C^{3,\alpha}(\overline{\tilde \Omega})} \le c \left(
\|w\|_{C^0\left(\overline{\tilde \Omega}\right)}+\|b\|_{C^{1, \alpha}(\partial \tilde \Omega)}\right).
\end{equation}
Noting that 
\begin{equation}
\|w\|_{C^0_x(\tilde \Omega)} = O(q(x))
\end{equation}
we conclude that $w$ satisfies the desired properties.
Now we set
\begin{equation}
u^0=w+q
\end{equation}
and then 
\begin{equation}
u^0_{\varepsilon}:= I_hu^0. 
\end{equation}
By construction $u^0_{\varepsilon}$ has the correct boundary values, i.e. 
$u^0_{\varepsilon}(x_i)=g(x_i)$ when $x_i \in N_h^b$ and it satisfies
\begin{equation}
T_{\varepsilon}[u^0_{\varepsilon}](x_i) \ge f(x_i)
\end{equation}
for all $x_i \in N_h^0$  provided $\lambda$ is large in view of the interpolation error estimates in \cite[Lemma 4.1]{NochettoNtogkasZhang2018}. Hence we have constructed $u_{\varepsilon}^0$ as desired. Note that we proved here a little bit more than needed. In order to apply \cite[Lemma 4.1]{NochettoNtogkasZhang2018} it suffices to have only the Schauder estimate (\ref{schauder}) on the level of $C^{2,\alpha}$ available.
Furthermore, we remark that the construction can be done so that the $L^{\infty}$-norm of $u^0_{\varepsilon}$ can be estimated uniformly in $h$.

(ii) We follow a Perron construction from \cite{NochettoNtogkasZhang2018} and use induction. 
First we label all interior nodes, let us say, $N_h^0=\{x_1, ..., x_m\}$, $m\in \mathbb{N}$. 
The induction begins with
$u_h^0 \in V_h$ from (i).
Let us assume we already have constructed $u_h^k\in V_h$ for some $k\in \mathbb{N}$ such that
\begin{equation} \label{properties}
\begin{aligned}
u_h^k \ge& u_h^0 \\
u_h^k(x_i) =& I_hg(x_i), \quad x_i \in N_h^b, \\
T_{\varepsilon}[u_h^k](x_i)\ge& f(x_i), \quad x_i \in N_h^0.
\end{aligned}
\end{equation}
In order to construct $u_h^{k+1}\in V_h$ which satisfies 
\begin{equation} \label{property_additional}
u_h^{k+1}\ge u_h^k
\end{equation}
as well as the properties \eqref{properties} with $k$ replaced by $k+1$
we first define auxiliary functions $u_h^{k,i}\in V_h$, $i=0, ..., m$.
We set
\begin{equation}
u_h^{k,0}:=u_h^k.
\end{equation}
Assume that $u_h^{k, i-1}\in V_h$ is already defined, $i\ge 1$. In order to define $u_h^{k,i}\in V_h$ we increase
(only) the value of $u_h^{k,i-1}(x_i)$ (eventually) until
\begin{equation} \label{increasement}
T_{\varepsilon}[u_h^{k,i}](x_i)=f(x_i).
\end{equation}
This defines $u_h^{k,i}$. The equality in \eqref{increasement} can indeed be achieved under this process which becomes
clear when we look at Lemma \ref{lemma_1} and (\ref{def_Q}). Noting that the centered second 
differences appearing in this definition of $Q$ are decreasing with slope $\frac{c}{h^2}$, $c$ a generic constant,  with respect to the central 
value for all directions and that all other expressions therein change under this process at most by a rate of $\frac{c}{h}$ the equality in \eqref{increasement} can clearly be achieved for $h$ sufficiently small.
This process potentially increases the second centered differences at
all the other nodes $x_j$, $j \neq i$ at a rate $\frac{c}{h^2}$ and changes lower order terms at most at a rate $\frac{c}{h}$.
Hence
\begin{equation}
 T_{\varepsilon}[u_h^{k,i}](x_j)\ge T_{\varepsilon}[u_h^{k,i-1}](x_j) \ge f(x_j) \quad \forall j \neq i.
 \end{equation}
 We repeat this process with the remaining nodes $x_j$ for $i<j\le m$ and set
 \begin{equation}
 u_h^{k+1}:=u_h^{k,m}.
 \end{equation}
 Note that the 'sufficient smallness' of $h$ can be chosen here uniformly.
 Clearly, $u_h^{k+1}$ satisfies \eqref{properties} and \eqref{property_additional}.
 
(iii)  We derive an a priori $L^{\infty}$-bound for the sequence $(u_h^k)_{k\in \mathbb{N}}$. 
The lower bound for this sequence follows from the remarks at the end of steps (i) and (ii). 
Recall that by (\ref{general_assumptions}) we have $A(x, \cdot)=0$ or $A=-I$.
The upper bound is chosen as follows.
We set $\tilde b_h=\max_{x_i \in N_h^b}g(x_i) \in V_h$. In the case $A(x, \cdot)=0$ we set
$b_h=\tilde b_h$ and in the case $A=-I$ we set
$b_h=\tilde b_h +c(\Omega)- (1- \frac{1}{4} \min f)I_h|x|^2 $ where $c(\Omega)$ is a positive constant
which depends on $\Omega$.
Clearly, by the comparison principle $b_h$ is an upper barrier$\mod O(h)$ for the sequence $(u_h^k)_{k\in \mathbb{N}}$ and we are finished, note that we assume here that $h$ is small.

(iv) Since $(u_h^k(x_i))_{k=1}^{\infty}$
is monotone and bounded from above for all $x_i \in N_h^0$ it converges and we set 
\begin{equation}
u_{\varepsilon}(x_i)=\lim_{k\rightarrow \infty}u_h^k(x_i)\quad \forall x_i \in N_h^0
\end{equation}
and extend $u_{\varepsilon}$  without relabeling to $u_{\varepsilon}\in V_h$. Then we have
$u_{\varepsilon}=I_hg$ on $\partial \Omega_h$ and
\begin{equation} \label{91}
T_{\varepsilon}[u_{\varepsilon}](x_i)\ge f(x_i)\quad \forall x_i \in N_h^0.
\end{equation}
We show that even equality holds in (\ref{91}) and assume for it that the inequality in (\ref{91}) is strict 
for a certain $x_i \in N_h^0$. Then we find arbitrary large $k$ such that
 \begin{equation}
T_{\varepsilon}[u_h^k](x_i)>f(x_i).
\end{equation}
But then in the construction of $u_h^{k+1}$ in step (ii) there was a certain 'space' for increasement which contradicts that $(u_h^k(x_i))_k$ is especially
pointwisely a Cauchy sequence.
Hence we have shown existence of $u_{\varepsilon}$ as desired and we also have obtained an a priori $L^{\infty}$-bound
which is independent from the discretization parameters.
\end{proof}

Hereby, our analysis of the discrete model is finished. The remaining sections are concerned with the convergence analysis of the discrete solutions $u_{\varepsilon}$, i.e. they show 
that the discrete solutions $u_{\varepsilon}$ of (\ref{discreteMongeAmpere})
converge to the solution $u$ of (\ref{1}) when $\varepsilon \rightarrow 0$ and the relative size of $h$ and $\delta$ satisfies 
a certain relation provided the original equation satisfies appropriate properties. Recall that $\varepsilon = (h, \delta, \theta)$.

\section{A special auxiliary function}\label{section6}
We 
construct in the following lemma a special auxiliary function.

\begin{lemma} \label{lemma_5_1}
Let $\Omega$ be uniformly convex, $h_0>0$ and $1<E\le c(\Omega, h_0)$ be sufficiently large within this range, $c(\Omega, h_0)$
a suitable constant which depends only on $\Omega$ and $h_0$ with
\begin{equation}
c(\Omega, h_0) \rightarrow \infty 
\end{equation} 
as $h_0 \rightarrow 0$ (this relation becomes more explicit in the proof).
There exists a $h_0=h_0(\Omega)$ such that for all $0<h<h_0$
the following holds. For each node $z\in N_h^0$ and $\delta>0$
with $\dist(z, \partial \Omega_h)\le \delta$ there exists a function $p_h \in V_h$ and $E'>E$
such that $T_{\varepsilon}[p_h](x_i)\ge E'$ for all $x_i\in N_h^0$, $p_h\le 0$ on $\partial \Omega_h$
and 
\begin{equation}
|p_h(z)|\le CE'\delta
\end{equation}
with $C$ depending on $\Omega$.
\end{lemma}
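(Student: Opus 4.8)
The plan is to take as the building block a rescaled and translated copy of the exponential barrier $q$ from \eqref{ansatz_function1}, arranged so that its zero level set lies on $\partial\Omega$ and so that near the boundary node $z$ its gradient is controlled. Concretely, fix $\tilde x \in \mathbb{R}^n$ with $\dist(\tilde x,\bar\Omega)\ge 1$ and set, for suitable $\lambda>0$ and a radius $R=R(\lambda,\Omega)$,
\begin{equation}
p(x) = \mu\left(e^{\lambda |x-\tilde x|^2}-R\right),
\end{equation}
where $R$ is chosen so that $e^{\lambda|x-\tilde x|^2}-R\le 0$ on $\bar\Omega$ with equality attained (by uniform convexity of $\Omega$) on a piece of $\partial\Omega$ that can be placed to contain $z$ after translating $\tilde x$; here $\mu>0$ is a free scaling parameter and $0<h<h_0$ with $h_0$ small. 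Then $p_h:=I_hp\in V_h$, $p_h\le 0$ on $\partial\Omega_h\mod O(h)$ (a harmless shift by $O(h)$ can be absorbed), and the bound $|p(z)|\le C\mu\lambda e^{\lambda|\tilde x|^2}\delta$ follows because $\dist(z,\partial\Omega_h)\le\delta$ and $p$ vanishes at the nearest boundary point: estimate $|p(z)|\le \|Dp\|_{L^\infty}\dist(z,\partial\Omega_h)\le C\mu\lambda e^{\lambda R_0^2}\delta$ with $R_0=\diam(\bar\Omega\cup\{\tilde x\})$. Choosing the overall constant $E' := c\,\mu\lambda^2$ (the natural size of $T_\varepsilon[p_h]$, see below) this reads $|p_h(z)|\le C E'\delta$ as required, with $C$ depending only on $\Omega$.

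Next I would verify $T_\varepsilon[p_h](x_i)\ge E'$ for all $x_i\in N_h^0$, following the computation in the proof of Lemma \ref{comparison} but now with $u_h$ absent. As there, $\nabla_\delta^2 p(x_i,v_r)=D_{v_r}D_{v_r}q\cdot\mu+O(\mu\delta)=2\mu\lambda e^{\lambda|x-\tilde x|^2}+4\mu\lambda^2 e^{\lambda|x-\tilde x|^2}\langle x-\tilde x,v_r\rangle^2+O(\mu\delta)$ and $\nabla_\delta p(x_i,e_k)=2\mu\lambda(x_k-\tilde x_k)e^{\lambda|x-\tilde x|^2}+O(\mu\delta)$, which is bounded by a fixed constant times $\mu\lambda$. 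Since $A$ is smooth, the term $v_r^T A(x_i,\nabla_\delta p(x_i,e_k))v_r$ is $O(1)$ uniformly (using Assumption \ref{ass1}: either $A\equiv -I$, contributing a positive $+1$, or $A(x,0)=0$ and $A$ is Lipschitz, contributing $O(\mu\lambda)$). Hence, provided $\theta$ is small enough that $\langle x-\tilde x,v_r\rangle^2\ge \tfrac12|x-\tilde x|^2\ge \tfrac12$ for all $r$ in all but at most one index (exactly the inequality used in Lemma \ref{comparison}), every factor $Q_{p_h}(x_i,v_r)$ is bounded below by $c\mu\lambda^2 e^{\lambda|x-\tilde x|^2}>0$ for $\lambda$ large — in particular $p_h$ is discretely $Q$-convex, so Lemma \ref{lemma_1} applies — and therefore
\begin{equation}
T_\varepsilon[p_h](x_i)=\min_{v\in O^\theta(n)}\prod_{j=1}^d Q_{p_h}(x_i,v_j)\ge (c\mu\lambda^2)^d =: E'.
\end{equation}
Finally, one fixes $\lambda$ large (depending on $\Omega$, $A$, $\theta$), notes $E'\to\infty$ as $\mu\to\infty$, and picks $\mu$ so that $E<E'$; the constraint $E\le c(\Omega,h_0)$ with $c(\Omega,h_0)\to\infty$ as $h_0\to0$ enters because all the $O(\mu\delta)$ and $O(h)$ error terms must be dominated by the main $c\mu\lambda^2 e^{\lambda|x-\tilde x|^2}$ term, which forces $h_0$ (hence $\delta$, since later $\delta$ will be tied to $h$) small relative to the admissible range of $E$.

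The main obstacle is bookkeeping the dependence of all constants on the various parameters so that the claimed hierarchy is self-consistent: $\lambda$ must be large enough (depending on $\Omega,A,\theta$) to make the exponential's Hessian dominate the $A$-contribution and the $\theta$-induced tilt in the direction vectors; $\mu$ then scales $E'$ up past $E$; and $h_0$ must be small enough that the discretization errors $O(\mu\delta)$, $O(h)$ (whose hidden constants depend on $\lambda$, $\mu$ through higher derivatives of $p$) stay below the lower bound on each $Q_{p_h}$. Keeping the $\mu$-dependence linear in $\nabla_\delta p$ and quadratic in $E'$, while the error terms are only $O(\mu\delta)$, is what makes the argument close; the one genuinely geometric input is the uniform convexity of $\Omega$, used both to place the tangency point of the level set $\{p=0\}$ at $z$ and to guarantee $\dist(z,\partial\Omega_h)\le\delta$ translates into the linear-in-$\delta$ bound on $|p_h(z)|$.
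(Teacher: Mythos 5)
Your construction is the same exponential bowl as the paper's (there the function is $f(x)=e^{\lambda|x-\tilde x|^2}-e^{\lambda|\tilde z-\tilde x|^2}$ with $\tilde z$ the nearest boundary point to $z$, the sphere $\partial B_R(\tilde x)$ tangent to $\partial\Omega$ at $\tilde z$ with $\Omega\subset B_R(\tilde x)$ via uniform convexity, and $p_h=I_h f$), but you handle the parameter hierarchy differently. The paper has no analogue of your scaling factor $\mu$: it makes $E'$ large purely by increasing $\lambda$, and the linear-in-$\delta$ bound on $|p_h(z)|$ is obtained by absorbing the first-derivative size into the definition of $E'$ itself, namely $E'=\max\{O(\lambda^2c_2^2e^{\lambda c_2^2}),\,O(c_1\lambda e^{\lambda c_1^2})\}$ where $c_1=\max_{\bar\Omega}|x-\tilde x|$ and $c_2=\min_{\bar\Omega}|x-\tilde x|$; then $|p_h(z)|\le\|Df\|_{L^\infty}\dist(z,\partial\Omega)\le CE'\delta$ with $C$ depending only on $\Omega$ automatically. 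Your route fixes $\lambda$ first (to beat the $A$-term and the $\theta$-tilt), then increases $\mu$ to push $E'$ past $E$. That buys a cleaner separation of concerns — $\lambda$ for convexity, $\mu$ for size — but it places the exponential constant $e^{\lambda R_0^2}$ in the constant $C$ of $|p_h(z)|\le CE'\delta$ rather than in $E'$, so the claimed $\Omega$-only dependence of $C$ holds only after $\lambda$ is pinned down once and for all. Two small internal issues in your write-up: you define $E'$ first as $c\mu\lambda^2$ and later as $(c\mu\lambda^2)^d$ (only the latter is compatible with the product structure of $T_\varepsilon$, and then $|p_h(z)|\le CE'\delta$ with $\Omega$-only $C$ needs $d\ge 2$ or a slightly different normalization); and you initially call the $A$-contribution $O(1)$ before correcting to $O(\mu\lambda)$ in the $A(x,0)=0$ case — both you and the paper tacitly use at-most-linear growth of $A$ in $p$ here, which is the relevant regime under Assumption \ref{ass1}. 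Finally, your proposal allows $p_h\le 0$ on $\partial\Omega_h$ only modulo $O(h)$; the paper arranges $f\le 0$ on all of $\bar\Omega$ so that $I_hf\le 0$ holds exactly, which is what the lemma states — you should either tune $R$ (or $\tilde x$) so that the zero level set is genuinely exterior-tangent, or explain why the $O(h)$ slack is acceptable downstream.
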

\begin{proof}
Let $z \in N_h^0$ and $\delta>0$ be arbitrary. 
Let $\tilde z \in \partial \Omega$ be a nearest boundary point, i.e.
\begin{equation}
|z-\tilde z| = \dist(z, \partial \Omega).
\end{equation}
Let
\begin{equation}
0< \kappa_1(x) \le ... \le \kappa_n(x)
\end{equation}
be the ordered-by-size $n$ principal curvatures of $\partial \Omega$ in $x\in \partial \Omega$ with 
respect to the outer unit normal of $\partial \Omega$ in $x$ (the convention is here as usual so that e.g. a unit sphere has principal curvatures equal to 1). In view of the uniform convexity of $\partial \Omega$
we have
\begin{equation}
\kappa := \min_{\partial \Omega} \kappa_1 >0.
\end{equation}
For the moment we fix a point $\tilde x \in \mathbb{R}^n$ and  a large $\lambda>0$ and we will adjust them later appropriately.
We define
\begin{equation}
f(x) = e^{\lambda |x-\tilde x|^2}-e^{\lambda |\tilde z-\tilde x|^2}, \quad x \in \mathbb{R}^n.
\end{equation}
The function $f$ looks roughly spoken like a bowl, attains a global minimum in $\tilde x$, is rotationally symmetric around $\tilde x$.
Furthermore, it is strictly monotone increasing and strictly convex along rays starting from $\tilde x$ where in addition this convexity in radial 
direction at a point $x\in \mathbb{R}^n$ can be quantified as being of size $O(\lambda^2e^{\lambda |x-\tilde x|^2})$. Here, the constant hidden in the $O$-notation 
depends on $\Omega$ and $\tilde x$.

Let us now adjust $\tilde x \in \mathbb{R}^n$ where we assume w.l.o.g. that $\tilde x \notin \bar \Omega$ and choose $R>\frac{1}{\kappa}$ such that
\begin{equation}
\partial \Omega \cap \partial B_R(\tilde x) = \{\tilde z\} \quad \text{and} \quad
\Omega \subset B_R(\tilde x).
\end{equation} 
Let 
\begin{equation}
c_1 = \max_{\bar \Omega}|x-\tilde x|, \quad c_2 = \min_{\bar \Omega}|x-\tilde x|�
\end{equation}
then the second derivatives of $f$ in $\bar \Omega$ are of size at least $O(\lambda^2c_2^2e^{\lambda c_2^2})$
and the first derivatives are of size at most $O(c_1\lambda e^{\lambda c_1})$. We increase $\lambda$
until $O(\lambda^2c_2^2e^{\lambda c_2^2})$ is large  compared to $E$ and $O(c_1\lambda e^{\lambda c_1})$. Then we set 
\begin{equation}
E' = \max\{O(\lambda^2c_2^2e^{\lambda c_2^2}), O(c_1\lambda e^{\lambda c_1})\}
\end{equation}
as well as
\begin{equation}
p_h=I_h(f-f(z)).
\end{equation}
Now we may assume that $E$ and $E'$ are bounded by a constant which may become arbitrary large provided  $h_0(\Omega)$
is correspondingly small
so that for $0<h<h_0$ the previous interpolation does not produce relevant errors. This finishes the proof of the lemma.
Note that we tacitly introduced concrete but generic constants in order to replace the $O$-notation
in the context of inequalities.
\end{proof}

\section{An approximating problem}\label{section7}
In the following remark we formulate some rather weak assumptions for equation \eqref{1} and its solution which allow us to 
construct an approximating smooth problem with a smooth solution (which is also the main purpose of this section). 

\begin{remark} \label{regularity_assumption}
\begin{enumerate}
\item (Regularity) $u\in C^{1}(\bar \Omega)$ is a viscosity solution of \eqref{1}.
\item (Comparison principle one sided around the solution) There is $\varepsilon_1>0$ such that the following holds. Given continuous $\tilde f_1, \tilde f_2, \tilde g_1, \tilde g_2$ with $f\le \tilde f_2\le \tilde f_1 \le f+\varepsilon_1$ in $\Omega$ and
$g-\varepsilon_1\le \tilde g_1\le \tilde g_2 \le g$ on $\partial \Omega$ and continuous viscosity solutions $\tilde u_1$
and $\tilde u_2$ of \eqref{1} with respect to the data $\tilde f_1, \tilde g_1$ and $\tilde f_2, \tilde g_2$, respectively, then there holds a comparison principle in the usual sense, i.e. $\tilde u_1 \le \tilde u_2$ in $\Omega$.
\end{enumerate}
\end{remark}
Let $\Omega_n \supset \Omega$, $n\in \mathbb{N}$, be an approximation of $\Omega$ by smooth convex sets with respect to the Hausdorff distance $d_H$, i.e. 
\begin{equation}
0< \dist_H(\Omega, \Omega_n) \le \delta_n \rightarrow 0.
\end{equation}
Let $p\in \Omega$ be arbitrary and fixed. The family of rays
\begin{equation}
\{R_p=\{p+te: t\ge 0\}:e\in \mathbb{R}^n, \|e\|=1\}
\end{equation}
clearly defines a bijection 
\begin{equation}
b_n: \partial \Omega \rightarrow \partial \Omega_n
\end{equation}
by mapping $R_p \cap \partial \Omega$ to $R_p \cap \partial \Omega_n$.
Let $f_n$ and $g_n$ be smooth functions in $\mathbb{R}^n$ approximating  $f$ and $g$, respectively,
such that
\begin{equation}\label{inequality}
f<f_n, \quad g_n<g,
\end{equation}
\begin{equation}
|g(x)-g_n(b_n(x))|\le \delta_n, \quad |f(x)-f_n(y)|\le \delta_n
\end{equation}
for all 
\begin{equation}
x \in \partial \Omega, y \in [x, b_n(x)]=\{tx+(1-t)b_n(x): 0\le t \le 1\}
\end{equation}
and
\begin{equation}
|f(x)-f_n(x)|\le \delta_n, \quad x \in \bar \Omega.
\end{equation}
We note that the above approximations can be obtained in a standard fashion and indicate that
especially inequalites (\ref{inequality}) can be achieved by first replacing $f$ by $f+\frac{\delta_n}{2}$
and $g$ by $g-\frac{\delta_n}{2}$ and then extending and mollifying these modified functions.

Let $u_n\in C^{\infty}(\bar \Omega)$ be classical solutions of
\begin{equation} \label{Approx_1}
\det \left(D^2u_n-A(x, Du_n)\right) = f_n \quad \text{in }\Omega_n
\end{equation}
and
\begin{equation} \label{Approx_2}
u_n=g_n \quad \text{on }\partial \Omega_n,
\end{equation}
cf. the useful exposition in the introduction of \cite{FigalliKimYoung-HeonMcCann2011} for an overview of different assumptions and corresponding references leading to different regularities.
Here, we mention especially the reference \cite{LiuTrudingerWang2010} mentioned on page 2 
of \cite{FigalliKimYoung-HeonMcCann2011} for the smooth case: smooth data imply smoothness of the solution.
 In the next lemma we estimate $u_n$ on the boundary $\partial \Omega$ by constructing suitable barriers.
We have the following plausible lemma which we will also prove rigorously in the following
without using any a priori estimates for the solution. Our proof without using the last named type of  estimates has the advantage that the lemma also holds when only the sufficient regularity without a priori estimates is available.
\begin{lemma}
There holds 
\begin{equation}
|u_n-g|\rightarrow 0
\end{equation}
uniformly on $\partial \Omega$ as $n \rightarrow \infty$.
\end{lemma}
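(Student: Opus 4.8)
The statement to prove is that $|u_n - g| \to 0$ uniformly on $\partial\Omega$ as $n\to\infty$, where $u_n$ solves the smooth approximating Monge–Ampère type problem on $\Omega_n \supset \Omega$. The natural strategy is to squeeze $u_n$ between an upper and a lower barrier at each boundary point $x_0 \in \partial\Omega$, with the barriers converging to $g(x_0)$ as $n\to\infty$. Since $u_n = g_n$ on $\partial\Omega_n$ and $g_n$ is uniformly close to $g$ (in the sense $|g(x) - g_n(b_n(x))| \le \delta_n$), the essential work is to control $u_n$ on $\partial\Omega$, which lies strictly inside $\Omega_n$ — so the barriers must be built on $\Omega_n$ and must dominate/subordinate $u_n$ via the classical comparison principle for the equation $\det(D^2 v - A(x,Dv)) = f_n$.

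First I would fix $x_0 \in \partial\Omega$ and, using uniform convexity of $\Omega$ (and hence of $\Omega_n$, uniformly in $n$ for $n$ large), pick a supporting ball or half-space at $b_n(x_0) \in \partial\Omega_n$. The upper barrier would be modeled on the exponential-type function already introduced in the paper, $q(x) = e^{\lambda|x-\tilde x|^2}$ with $\tilde x$ placed outside $\Omega_n$ along the outward normal at $b_n(x_0)$: one takes $w_n^+(x) = g_n(b_n(x_0)) + \mu\big(e^{\lambda|x-\tilde x|^2} - e^{\lambda|b_n(x_0)-\tilde x|^2}\big)$ with $\mu, \lambda$ chosen (depending only on $\Omega$, $\|A\|$, $\sup f_n \le \sup f + 1$, and $\operatorname{diam}$) so that $\det(D^2 w_n^+ - A(x,Dw_n^+)) \ge f_n$ in $\Omega_n$; this uses precisely the computation in the proof of Lemma~\ref{comparison} and Lemma~\ref{lemma_5_1}, where $D^2 q$ is of order $\lambda^2 e^{\lambda|x-\tilde x|^2}$, dominating the $A$-term which is only of order $\lambda e^{\lambda|x-\tilde x|^2}$. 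On $\partial\Omega_n$ one checks $w_n^+ \ge g_n$ using that $g_n$ has bounded (indeed controlled) gradient and that $q$ vanishes at $b_n(x_0)$ with strictly positive normal derivative there; comparison then gives $u_n \le w_n^+$ in $\Omega_n$, and evaluating at $x_0$ yields $u_n(x_0) \le g_n(b_n(x_0)) + O(|x_0 - b_n(x_0)|) + (\text{lower order}) \le g(x_0) + C\delta_n$. The lower barrier is symmetric: subtract such an exponential bump, or more simply use a large multiple of a strictly concave-from-below piece, to get a subsolution lying below $g_n$ on $\partial\Omega_n$; here one must be slightly careful that $D^2 v - A(x,Dv)$ stays positive definite (admissibility), which is why the bump is taken steep but the domain $\Omega_n$ stays within a fixed neighborhood of $\bar\Omega$.

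Assembling the two inequalities gives $|u_n(x_0) - g(x_0)| \le C\delta_n$ with $C$ independent of $x_0$ and of $n$, since $\Omega_n \to \Omega$ in Hausdorff distance with $\operatorname{dist}_H(\Omega,\Omega_n) \le \delta_n$, the distance $|x_0 - b_n(x_0)|$ along the ray through $p$ is $O(\delta_n)$ uniformly in $x_0$, and the curvature bound $\kappa = \min_{\partial\Omega}\kappa_1 > 0$ keeps the supporting-ball radius bounded below uniformly; letting $n\to\infty$ finishes the proof. The main obstacle, as in the rest of the paper, is the $A$-term: one must verify that along the exponential barrier the perturbation $-v_r^T A(x, Dw_n^\pm) v_r$ in each factor $\nabla^2_\delta$-analogue (here the continuous $D_{v_r}D_{v_r} w_n^\pm$) is genuinely lower order relative to the exponential growth of $D^2 q$, uniformly in $n$ — this is where choosing $\lambda$ large (depending only on the fixed data and the fixed enlarged domain containing all $\Omega_n$) and invoking $A(x,0)=0$ or $A=-I$ together with $C^1$-bounds on $A$ is essential, and it is the only non-routine point; the rest is the standard barrier-and-comparison boundary-estimate argument.
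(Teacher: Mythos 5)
Your overall strategy — squeeze $u_n$ between barriers built from the exponential-type function and invoke the comparison principle — does match the paper's plan, and your lower-bound mechanism ``subsolution lying below $g_n$ on $\partial\Omega_n$ implies lying below $u_n$'' is the right one. The implementation, however, contains two errors that would make the argument fail. First, your upper barrier $w_n^+$ is a steep convex function with $\det\left(D^2w_n^+-A(x,Dw_n^+)\right)\ge f_n$, i.e.\ a \emph{sub}solution, and you want to deduce $u_n\le w_n^+$ in $\Omega_n$ from $w_n^+\ge g_n$ on $\partial\Omega_n$. That is the wrong direction: the Monge--Amp\`ere comparison principle yields that a subsolution lying \emph{below} the solution on the boundary stays below in the interior; a subsolution lying above the solution on the boundary gives no conclusion at all. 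To get an upper bound on $u_n$ one needs an admissible \emph{super}solution ($\det(\cdot)\le f_n$ while $D^2v-A(x,Dv)>0$), and this is precisely what a steep convex exponential is \emph{not}. Second, your proposed lower barrier (``subtract such an exponential bump'', or a ``strictly concave-from-below piece'') is concave and hence violates admissibility $D^2v-A(x,Dv)>0$ from the outset; taking the bump steeper only makes it more concave, and restricting $\Omega_n$ to a fixed neighborhood of $\bar\Omega$ does not help either. The paper's admissible lower barrier is built the other way around: it \emph{adds} the convex function $p\le 0$ to the cone $g_m(y)-C_0|x-y|$, so that the cone produces the required boundary inequality and the exponential produces both admissibility and $\det(\cdot)\ge f_m$.

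What your proposal is missing is the genuinely non-trivial device the paper uses for the \emph{upper} estimate. Since an admissible supersolution is hard to manufacture for $\det\left(D^2(\cdot)-A\right)\le f_n$, the paper takes the concave function $b_m^+=-p(x)+g_m(y)+C_0|x-y|$, which is \emph{not} admissible as an $n$-dimensional function, and runs the comparison only along a one-dimensional chord (the line through $y$ and $z$, written in the paper as $\bar\Omega\cap\{x_1=0\}$ after a coordinate change): restricted to that line, $u_m$ is a scalar convex function, $b_m^+$ is a scalar concave supersolution with dominating endpoint values, and a one-variable maximum principle gives $u_m\le b_m^+$ at $z$. This deliberate asymmetry — a global convex subsolution for the lower bound, and a concave function paired with a dimensional-reduction argument for the upper bound — is the heart of the proof and is absent from your sketch; a ``symmetric'' barrier argument of the type you describe cannot work here.
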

\begin{proof}
Let us fix $z \in \partial \Omega$ and evaluate 
$g$ and $u_n$ at $z$ and compare them. 
Let $y$
be the closest point to $z$ in  $\partial \Omega_n$, then $|z-y|\le \delta_n$ and given $\delta >0$
we have
\begin{equation}
|g(z)-g_m(y)|\le |g(z)-g_m(z)|+|g_m(z)-g_m(y)|\le \delta 
\end{equation}
provided $m$ is sufficiently large and also $n=n(m)$ is sufficiently large.
Let $p$ be the (not discrete) barrier function from the proof of Lemma \ref{lemma_5_1}
associated with $\Omega_n$, $z\in \Omega_n$, i.e.
\begin{equation}
p(x) = e^{\lambda |x-\tilde x|^2}-e^{\lambda |\tilde z-\tilde x|^2}
\end{equation}
where $\tilde x, \tilde z$ are chosen accordingly to the proof of Lemma \ref{lemma_5_1} and we may arrange it so that $\tilde z$
equals the above specified $y$, i.e. $y=\tilde z$. 
We define the function
\begin{equation}
b_m^{-}:= p(x)+g_m(y)-C_0|x-y|
\end{equation}
where $C_0\ge \|g_m\|_{C^1(\bar \Omega)}$.
Clearly, $b_m^{-}\le g_m$ in $\bar \Omega$ in view of $p\le 0$ in $\bar \Omega$ and we also have that
\begin{equation}
T_{\varepsilon}[b_m^{-}]\ge f_m\quad \text{ in } \Omega_n
\end{equation}
provided $\lambda$ is sufficiently large. Hence by the comparison principle we conclude that
\begin{equation}
b_m^{-}\le u_m \quad \text{ in } \Omega_n.
\end{equation}
Evaluating this inequality in $z$ and retranslation by using the definition of $b_m^{-}$ leads to
\begin{equation}
g_m(y)-C_m\delta_n \le u_m(z)
\end{equation}
where $C_m>0$ is a constant which may depend on $m$ (but not on $n$).
Similarly, using 
\begin{equation}
b_m^{+}(x):=-p(x)+g_m(y)+C_0|x-y|
\end{equation}
as upper barrier function for $u_m$ which is $Q$-convex on the one-dimensional line 
\begin{equation}
\bar \Omega \cap \{x_1=0\}
\end{equation}
we conclude from the maximum principle in one variable that $b_m^{+}\ge u_m$ in $\bar \Omega$.
Similarly as before we then get
\begin{equation}
u_m(z) \le g_m(y)+C_m\delta_z.
\end{equation}
Putting this by using the triangle inequality together we conclude that
\begin{equation}
|g(z)-u_m(z)| \le |g(z)-g_m(y)|+|g_m(y)-u_m(z)| \le C_m\delta_n+\delta.
\end{equation}
\end{proof}

The following lemma gives the desired arbitrary good approximation of \eqref{1} by smooth problems (i.e. with smooth data) with smooth solutions. 
\begin{lemma} \label{lemma7}
Let $f_n$, $g_n$, $\Omega_n$ and $u_n$ as before. Let $\varepsilon>0$ then there is $n \in \mathbb{N}$ such that
\begin{equation}
|u_n-u|\le \varepsilon
\end{equation}
in $\Omega$.
\end{lemma}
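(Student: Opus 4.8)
The plan is to establish the two one-sided bounds $u_n\le u+\varepsilon$ and $u_n\ge u-\varepsilon$ in $\Omega$ for $n$ large, working throughout with the restriction of $u_n$ to $\bar\Omega$. Since $\Omega\subset\Omega_n$, that restriction is a classical, hence continuous viscosity, solution of \eqref{1} in $\Omega$ with right-hand side $f_n|_{\bar\Omega}$ and boundary values $\gamma_n:=u_n|_{\partial\Omega}$; by the preceding lemma $\omega_n:=\|\gamma_n-g\|_{L^\infty(\partial\Omega)}\to 0$, while $f<f_n\le f+\delta_n$ with $\delta_n\to 0$.

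For the upper bound I would use that the operator in \eqref{1} is invariant under vertical shifts: for a constant $c$ one still has $\det(D^2(u_n-c)-A(x,D(u_n-c)))=f_n$. Taking $c=\omega_n$, the function $u_n-\omega_n$ is a continuous viscosity solution of \eqref{1} with data $(f_n,\gamma_n-\omega_n)$, and for $n$ large $f\le f_n\le f+\varepsilon_1$ in $\Omega$ and $g-\varepsilon_1\le\gamma_n-\omega_n\le g$ on $\partial\Omega$ (the latter because $g-\omega_n\le\gamma_n\le g+\omega_n$). Hence Remark~\ref{regularity_assumption}(2), applied with $\tilde u_1=u_n-\omega_n$, $(\tilde f_1,\tilde g_1)=(f_n,\gamma_n-\omega_n)$ and $\tilde u_2=u$, $(\tilde f_2,\tilde g_2)=(f,g)$, gives $u_n-\omega_n\le u$, i.e.\ $u_n\le u+\omega_n$. (Alternatively this is immediate from the supersolution property of $u$: if $u_n-u$ had a positive interior maximum $M$, then $u_n-M$ would be a smooth admissible function touching $u$ from below there, forcing $f_n\le f$ at that point, which contradicts $f_n>f$; so the maximum of $u_n-u$ is either nonpositive or attained on $\partial\Omega$, where it is at most $\omega_n$.)

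The lower bound is the genuinely hard direction, since the one-sidedness of Remark~\ref{regularity_assumption}(2) together with $f_n>f$ prevents sandwiching $u$ from below by a shift of $u_n$. I would first record a uniform estimate $\|u_n\|_{L^\infty(\bar\Omega)}\le C$: the upper bound already gives $u_n\le u+\omega_n\le C$, and a lower bound follows from the explicit quadratic barrier $w(x)=a(|x-x_0|^2-\rho)$, where by Assumption~\ref{ass1} ($A=0$ or $A=-I$) one chooses $a,\rho>0$ so that $D^2w-A(x,Dw)>0$, $\det(D^2w-A(x,Dw))\ge f_n$ on $\Omega_n$ and $w\le g_n$ on $\partial\Omega_n$; the classical comparison principle in the admissible branch then yields $w\le u_n$. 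With this uniform bound I would invoke the stability of viscosity solutions: the half-relaxed limits $\bar u(x)=\limsup_{n,\,y\to x}u_n(y)$ and $\underline u(x)=\liminf_{n,\,y\to x}u_n(y)$ on $\bar\Omega$ are, respectively, a viscosity subsolution and a viscosity supersolution of \eqref{1} (consistency, using $f_n\to f$ uniformly); moreover the barrier constructions of Lemma~\ref{lemma_5_1} and of the preceding lemma, being valid on all of $\bar\Omega$, force $\bar u=\underline u=g$ on $\partial\Omega$, so these semilimits attain the boundary datum classically. Remark~\ref{regularity_assumption}(2) then gives $\bar u\le u\le\underline u$, whence $\bar u=\underline u=u$ and $u_n\to u$ uniformly on $\bar\Omega$; in particular $u_n\ge u-\varepsilon$ for $n$ large.

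I expect the stability step for the lower bound to be the main obstacle: one has to check carefully that the half-relaxed limits of the $u_n$ are sub/supersolutions attaining $g$ on $\partial\Omega$ (the explicit barriers being indispensable here, since no a priori interior estimates are assumed) and that the comparison statement of Remark~\ref{regularity_assumption}(2), literally phrased for pairs of solutions, may be used in this sub/supersolution form. A more elementary alternative — if one additionally grants that viscosity solutions $\underline u_\eta$ of \eqref{1} with data $(f+\eta,\,g-\eta)$ exist for all small $\eta>0$ — is to observe that $\{\underline u_\eta\}$ is monotone in $\eta$ with $\underline u_\eta\le u$, that its limit as $\eta\downarrow 0$ is a continuous viscosity solution of \eqref{1} with data $(f,g)$ (the convergence being uniform by Dini's theorem) and hence equals $u$ by Remark~\ref{regularity_assumption}(2), and that $\underline u_{\eta_n}\le u_n$ for $\eta_n:=\max(\delta_n,2\omega_n)$ by one further application of the same comparison principle.
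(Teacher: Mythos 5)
Your upper-bound direction ($u_n\le u+\omega_n$, by shifting $u_n$ or by the direct touching argument using the supersolution property of $u$ and $f_n>f$) is essentially the paper's own observation that $u_n\le u$, made more careful about the boundary datum of $u_n$ on $\partial\Omega$ being $u_n|_{\partial\Omega}$ rather than $g_n$.

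The lower bound is where you diverge from the paper, and where a real gap remains. The paper does not invoke half-relaxed limits or Barles--Perthame stability. It constructs an \emph{explicit} candidate subsolution $u^-:=u+\alpha q-\beta$, with $q$ the exponential barrier from \eqref{ansatz_function1} and $\beta=\|g-g_n\|_{L^\infty(\partial\Omega)}$, and verifies directly from the definition that $u^-$ is a viscosity subsolution of the perturbed problem with right-hand side $f_n$: if $\phi$ touches $u^-$ from above at $x_0$ then $w=\phi-\alpha q+\beta$ touches $u$ from above, so $T[w]\ge f$, and the Courant--Fischer min-max principle together with the elementary inequality \eqref{deliberation} shows that the ordered eigenvalues of $D^2\phi-A(x,D\phi)$ each exceed those of $D^2w-A(x,Dw)$ by at least some fixed $\varepsilon>0$, whence $T[\phi]\ge f+\varepsilon^{n-1}\min f^{1/n}\ge f_n$. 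Comparing $u^-$ with the \emph{smooth classical} solution $u_n$ then gives $u^-\le u_n\le u$ and the quantitative bound $0\le u-u_n\le-\alpha q+\beta$. By contrast, your stability argument needs a comparison principle between a general viscosity subsolution and a general viscosity supersolution of the limit problem; Remark~\ref{regularity_assumption}(2) is literally stated only for pairs of \emph{solutions}, so it does not directly license the step $\bar u\le u\le\underline u$. You flag this yourself, but it is the crux: the paper's construction avoids it precisely because $u^-$ is compared only against the smooth $u_n$, where the comparison is elementary and one-sided. Your alternative route via the monotone family $\underline u_\eta$ has two further gaps: Dini's theorem requires the monotone limit to be continuous (the increasing limit of continuous functions is only lower semicontinuous a priori), and existence of $\underline u_\eta$ for the data $(f+\eta,g-\eta)$ is an additional hypothesis not granted by the paper. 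The paper's barrier-based construction is thus both more self-contained (it only uses the subsolution property of $u$ and classical comparison with smooth solutions) and more quantitative (it yields an explicit closeness $-\alpha q+\beta$), at the price of the eigenvalue-perturbation computation which your abstract argument does not need.
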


\begin{proof}
Let $q\le 0$ be the function from (\ref{ansatz_function1}) and $\alpha, \beta>0$ 
suitable constants which will be specified later.
We consider the auxiliary function
\begin{equation}
u^{-}:=u+\alpha q-\beta.
\end{equation}
We observe that
\begin{equation}
u^{-}\le u-\beta = g -\|g-g_n\|_{L^{\infty}(\partial \Omega)} \le g_n
\end{equation}
on $\partial \Omega$ for $\beta= \|g-g_n\|_{L^{\infty}(\partial \Omega)}$.
Let $\phi \in C^2(\Omega)$ and $x_0 \in \Omega$
be a point where 
\begin{equation}
u^{-}-\phi=u-(\phi-\alpha q+\beta)
\end{equation}
attains a maximum. Abbreviating 
\begin{equation}
w=\phi - \alpha q + \beta \in C^2(\Omega)
\end{equation}
and using that $u$ is a viscosity subsolution of \eqref{1} we conclude that
\begin{equation}
T[w]\ge f.
\end{equation}
We would like to show that 
\begin{equation} \label{goal_to_show}
T[\phi]\ge f_n
\end{equation}
from which we deduce that $u^{-}$ is a viscosity subsolution of the problem \eqref{Approx_1}, \eqref{Approx_2}.
For it we evaluate $T[\phi]$ more explicitly. As a tool we use the following straightforward and general relation. For positive numbers $a_1, ..., a_n, \varepsilon$ holds when setting
\begin{equation}
\prod_{i=1}^na_i = z>0
\end{equation} 
that
\begin{equation} \label{deliberation}
\prod_{i=1}^n(a_i+\varepsilon)\ge \prod_{i=1}^na_i + \varepsilon^{n-1} \sum_{i=1}^na_i\ge z+\varepsilon^{n-1} z^{\frac{1}{n}}.
\end{equation}
For fixed $x\in \bar \Omega$ we let $a_1, ..., a_n$ be the eigenvalues of 
\begin{equation}
D^2w(x)-A(x, Dw(x)).
\end{equation}
From the min-max characterization of eigenvalues (given by the Courant-Fisher-Weyl maximum
principle)  we conclude that the ordered by size eigenvalues $\lambda_1 \le ... \le \lambda_n$ of 
\begin{equation}
D^2\phi(x)-A(x, D\phi(x))
\end{equation}
satisfy 
\begin{equation}
\lambda_i \ge a_i+\varepsilon
\end{equation}
for some $\varepsilon>0$ provided $\lambda$ in the definition of $p$ is sufficiently large. Hence we have
\begin{equation}
T[\phi] \ge f+\varepsilon^{n-1} \min f^{\frac{1}{n}}
\end{equation}
in view of our previous deliberation (\ref{deliberation}).
Clearly, we can achieve that \eqref{goal_to_show} holds. Since $u\in C^1(\bar \Omega)$ we may assume w.l.o.g. in the previous argumentation that $\|\phi\|_{C^1(\bar \Omega)}\le c(\|u\|_{C^1(\bar \Omega)})$. Hence the previous mechanism works
for $\lambda$ sufficiently large depending only on $f$, $g$ and $\|u\|_{C^1(\bar \Omega)}$ and independently from the choice of $\alpha$. Hence we see that for $n$ sufficiently large we may choose $\alpha$ sufficiently small and the claim follows since
\begin{equation}
u^{-} \le u_n \le u
\end{equation}
and 
\begin{equation}
u-u^{-} = -\alpha q+\beta
\end{equation}
can be made small for large $n$.
\end{proof}

\section{Convergence properties of the discrete solutions when the scales go to zero}
\label{section8}

Since $u_{\varepsilon}$ is defined in the computational domain $\Omega_h$
 and $\Omega_h\subset \Omega$, we extend $u_{\varepsilon}$ to $\Omega$
as follows. 
Given $x\in \Omega\setminus \Omega_h$ we choose $z \in \partial \Omega_h$
as the nearest point in $\Omega_h$ to $x$ which is unique because $\Omega_h$ is convex and let
\begin{equation}\label{boundary_definition}
u_{\varepsilon}(x):= u_{\varepsilon}(z)=I_hg(z) \quad \forall x \in \Omega\setminus \Omega_h.
\end{equation}

In the following theorem we prove convergence of the discrete solutions to the solution of the original problem.

 \begin{theorem}
 Let $\Omega$ be uniformly convex, $f, g \in C(\bar \Omega)$
 and $f> 0$ in $\bar \Omega$. Let $u$ be a solution of \eqref{1} satisfying the assumptions in Remark \ref{regularity_assumption}. 
 The discrete solutions $u_{\varepsilon}$
 of \eqref{equ_1} and \eqref{boundary_definition} converge uniformly to $u$ as $\varepsilon=(h, \delta, \theta)\rightarrow 0$ and $\frac{h}{\delta}\rightarrow 0$.
 Here, the constant $\Lambda$ in the definition of the finite element space, cf. (\ref{def_V_h}),  depends on $\varepsilon$ in the general case. 
 If in addition the sequence of solutions $u_n$ of the approximating problems as constructed in the previous section
is uniformly bounded in $C^3$ then $\Lambda$ can be chosen uniformly in $\varepsilon$.
\end{theorem}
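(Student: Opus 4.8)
The plan is to bracket $u_\varepsilon$ between discrete sub- and super-barriers built from the smooth approximating solutions $u_n$ of Section~\ref{section7}, using the discrete comparison principle Lemma~\ref{comparison} in the interior and the boundary barriers of Lemma~\ref{lemma_5_1} near $\partial\Omega_h$. Fix a target accuracy $\sigma>0$. By Lemma~\ref{lemma7} there is $n$ with $|u_n-u|\le\sigma$ on $\Omega$ and we may also assume $\dist_H(\Omega,\Omega_n)\le\sigma$; we \emph{freeze} this $n$ --- hence $\Omega_n$, the smooth data $f_n>f$, $g_n<g$ and the classical admissible solution $u_n\in C^\infty(\bar\Omega)$ --- and only afterwards let $\varepsilon=(h,\delta,\theta)\to0$ with $h/\delta\to0$. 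Because $u_n$ is admissible, $D^2u_n-A(x,Du_n)\ge c_0I>0$ on $\bar\Omega$ (eigenvalues bounded above by a norm of $u_n$ and $A$, product $=f_n\ge\min f-\sigma>0$), so for $\theta,\delta$ small every $Q_{I_hu_n}(x_i,v_j)$ is positive and, by the Hadamard identity $\det M=\min_{O\in O(n)}\prod_j(O^TMO)_{jj}$ for $M>0$ together with the interpolation/consistency estimates of \cite[Lemma 4.1]{NochettoNtogkasZhang2018} --- whose error is of order $(\delta+\theta+h^2/\delta^2)$ times a norm of $u_n$ --- one obtains $T_\varepsilon[I_hu_n](x_i)=f_n(x_i)+O(\delta+\theta+h^2/\delta^2)$ at every interior node $x_i$ with $\dist(x_i,\partial\Omega_h)\ge\delta$.

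Using the strictly convex exponential function $q$ of \eqref{ansatz_function1} (normalised so that $q\le-c<0$ on $\bar\Omega$) and a small parameter $\eta>0$ coupled to $\sigma$, $\dist_H(\Omega,\Omega_n)$ and $\|f-f_n\|_\infty$, set
\[
\underline u_h:=I_h\bigl(u_n+\eta q-\beta_1\bigr),\qquad\overline u_h:=I_h\bigl(u_n-\eta q+\beta_2\bigr),
\]
with $\beta_i=\beta_i(\sigma)\to0$ chosen so that $\underline u_h\le I_hg\le\overline u_h$ on $\partial\Omega_h$ --- possible since $|u_n-g|\to0$ on $\partial\Omega$ (the boundary-value lemma of Section~\ref{section7}) while $-\eta q\ge\eta c>0$. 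Adding $\eta q$ only enlarges $D^2u_n-A$, while subtracting it lowers the smallest eigenvalue by a definite amount $\gtrsim\eta$ yet keeps the matrix $\ge\tfrac{c_0}{2}I$ for $\eta$ small; hence, once $\varepsilon$ is small enough that the consistency error $\delta+\theta+h^2/\delta^2$ times the (now fixed) norm of $u_n$ is $\ll\min\{f_n-f,\eta\}$, at every interior node at distance $\ge\delta$ from $\partial\Omega_h$
\[
T_\varepsilon[\underline u_h]\ \ge\ f=T_\varepsilon[u_\varepsilon]>0,\qquad 0<T_\varepsilon[\overline u_h]\ \le\ f=T_\varepsilon[u_\varepsilon].
\]
Running the argument of Lemma~\ref{comparison} on the sub-collection of these nodes, with the boundary-strip nodes serving as boundary data, gives $\underline u_h\le u_\varepsilon\le\overline u_h\mod O(h)$ there, hence $|u_\varepsilon-u_n|\le 2\eta\sup_{\bar\Omega}|q|+\beta_1+\beta_2+O(h)$ and, for $\varepsilon$ small, $|u_\varepsilon-u|\le C\sigma$ on $\{\dist(\cdot,\partial\Omega_h)\ge\delta\}$.

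On the boundary strip $\{z\in N_h^0:\dist(z,\partial\Omega_h)<\delta\}$ the consistency error $h^2/\delta_i^2$ need no longer be small --- precisely the situation Lemma~\ref{lemma_5_1} is designed for. For each such $z$, Lemma~\ref{lemma_5_1} supplies a discrete subsolution barrier $p_h$ with $p_h\le0$ on $\partial\Omega_h$ and $|p_h(z)|\le C\delta$, $C=C(\Omega)$; translating $p_h$ by $I_hg$ and combining with the analogous one-sided (one-dimensional $Q$-convex) upper barrier from the boundary-value lemma of Section~\ref{section7}, a further application of Lemma~\ref{comparison} yields $|u_\varepsilon(z)-g(\tilde z)|\le C\delta+O(h)$ for the nearest boundary point $\tilde z$. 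Since $u$ is continuous with $u=g$ on $\partial\Omega$ and $|z-\tilde z|<\delta$, this gives $|u_\varepsilon-u|\le C\delta+\omega_u(\delta)+O(h)$ on the strip ($\omega_u$ the modulus of continuity of $u$), and the same bound transfers to $\Omega\setminus\Omega_h$ through the extension \eqref{boundary_definition}, using $\dist_H(\Omega,\Omega_h)=O(h^2)$ by uniform convexity. Combining with the interior estimate, $\|u_\varepsilon-u\|_{L^\infty(\Omega)}\le C\sigma+C\delta+\omega_u(\delta)+O(h)$; letting $\varepsilon\to0$ with $h/\delta\to0$ and then $\sigma\to0$ proves uniform convergence.

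The real obstacle, and the reason the statement must keep track of $\Lambda$, is the coupling of scales in this limiting scheme: Lemma~\ref{comparison} holds only $\mod O(h)$ with a constant depending on $\Lambda$, the barriers $\underline u_h,\overline u_h$ lie in $V_h^\Lambda$ only if $\Lambda$ dominates $\|u_n\|_{C^3}$ (which also enters the consistency estimate and the bound $c_0$), and the required $n=n(\sigma)\to\infty$ as $\sigma\to0$; hence in general $\Lambda$ must be allowed to grow with $\varepsilon$. If instead $(u_n)$ is uniformly bounded in $C^3$, one fixes $\Lambda$ at the outset, the constants in Lemma~\ref{comparison}, in \cite[Lemma 4.1]{NochettoNtogkasZhang2018} and in $c_0$ become uniform, and all error terms are controlled independently of $\varepsilon$, so $\Lambda$ may be taken uniform in $\varepsilon$. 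The routine points still to be checked are that the comparison argument of Lemma~\ref{comparison} localises to a sub-collection of nodes once its boundary values are known, and that $\eta$, the $\beta_i$, and the order of the limits $\varepsilon\to0$, $\sigma\to0$ can be chosen compatibly with all the smallness requirements.
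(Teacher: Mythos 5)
Your argument is correct and follows the paper's strategy --- exponential barriers from \eqref{ansatz_function1}, the discrete comparison principle (Lemma~\ref{comparison}), and the smooth approximants $u_n$ of Section~\ref{section7} --- with two organisational differences worth noting. First, you perturb $I_hu_n$ by $\pm\eta q$ and compare the result with $u_\varepsilon$; the paper perturbs $u_\varepsilon$ instead, setting $b_\varepsilon^{\mp}=u_\varepsilon\pm\rho q_h$ and comparing with $I_hu_n$. These are dual formulations, and both reduce the claim to $|I_hu_n-u_\varepsilon|\le C\rho\sup|q|+O(h)$ followed by sending $\rho$ and the scales to zero. Second, and more substantively, you split $N_h^0$ into nodes at distance $\ge\delta$ from $\partial\Omega_h$, where the consistency error for $T_\varepsilon[I_hu_n]$ is $O(\delta+\theta+h^2/\delta^2)$, and the boundary strip $\{\dist<\delta\}$, which you control with the discrete barrier of Lemma~\ref{lemma_5_1}. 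The paper's proof does not perform this split: it works on all of $N_h^0$ under a ``W.l.o.g.'' consistency bound on $T_\varepsilon[I_hu_n]$ that is not obviously valid on the strip, where $\delta_i\sim ch$ can make $h^2/\delta_i^2=O(1)$. Your finer treatment makes explicit a point the paper leaves implicit, and indeed Lemma~\ref{lemma_5_1} appears to have been built for precisely this use, even though the paper only calls on it indirectly through the boundary-value lemma of Section~\ref{section7}. The two items you defer --- localisation of Lemma~\ref{comparison} to the interior sub-collection of nodes with the strip supplying discrete boundary data, and the bookkeeping of the smallness choices for $n$, $\eta$, $\beta_i$, $\lambda$, $\varepsilon$ --- are genuinely routine: the first just re-runs the maximum argument from Lemma~\ref{comparison} after observing that a positive maximum of $\underline u_h-u_\varepsilon$ over $N_h$ occurs either on the strip (handled by the boundary estimate) or at an interior node where the second-difference step applies, and the second is the usual ``choose $n$, then $\eta$, $\lambda$, then $\varepsilon$'' ordering you already indicate.
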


\begin{proof}
We first split the domain 
\begin{equation}
\begin{aligned}
\|u-u_{\varepsilon}\|_{L^{\infty}(\Omega)}
\le& \| u-u_{\varepsilon}\|_{L^{\infty}(\Omega_h)}+\|u-u_{\varepsilon}\|_{L^{\infty}(\Omega\setminus \Omega_h)} \\
=& I_1 + I_2.
\end{aligned}
\end{equation}
Estimating the first term with the triangle inequality gives
\begin{equation}
I_1 \le \|u-u_{n}\|_{L^{\infty}(\Omega_h)}+\|u_n-I_hu_n\|_{L^{\infty}(\Omega_h)}
+\|I_h u_n-u_{\varepsilon}\|_{L^{\infty}(\Omega_h)}
\end{equation}
where $u_n$ is the solution of the approximating problem from the previous section and $n$ is assumed to be sufficiently large,
and hence 
\begin{equation}
\|u-u_n\|_{L^{\infty}(\Omega_h)}
\end{equation}
can be assumed to be arbitrarily small.
In view of the standard interpolation estimate 
\begin{equation}
 \|u_n-I_hu_n\|_{L^{\infty}(\Omega_h)}\le c \|u_n\|_{W^{2, \infty}(\Omega)}h^2
\end{equation}
we may assume that $h=h(n)$ is so small that the norm on the left-hand side is as small as desired as well as that $\Lambda=\Lambda(\varepsilon)$ is sufficiently large.
From (\ref{boundary_definition}) we conclude that for all $x\in \Omega \setminus \Omega_h$
and corresponding $z=z(x)\in  \partial \Omega_h$ we have
\begin{equation}
\begin{aligned}
|u(x)-u_{\varepsilon}(x)| =& |u(x)-u_{\varepsilon}(z)|\\
\le& |u(x)-u(z)|+|u(z)-u_{\varepsilon}(z)|.
\end{aligned}
\end{equation}
Denoting the modulus of continuity of $u \in C(\bar \Omega)$
by $\tau$ we have
\begin{equation}
I_2= \|u-u_{\varepsilon}\|_{L^{\infty}(\Omega\setminus \Omega_h)}
\le \tau(\dist_H(\Omega, \Omega_h))+\|u-u_{\varepsilon}\|_{L^{\infty}(\Omega_h)}.
\end{equation}
Since $\dist_H(\Omega, \Omega_h)\rightarrow 0$ as $h\rightarrow 0$
the proof reduces to showing that  
\begin{equation}
\|I_h u_n-u_{\varepsilon}\|_{L^{\infty}(\Omega_h)}
\end{equation}
can be made arbitrarily small which will be shown in the remaining part of the proof. Note that instead of arguing with the modulus of continuity of $u$
we could have also used the $C^{0}$-estimates which we derived in the proof of Lemma \ref{lemma7} and the modulus of continuity of the corresponding approximating $u_n$.

Recall that we have chosen and will choose for the following $n$ sufficiently large. Furthermore, we will assume that $h=h(n)$
is chosen sufficiently small and $\Lambda=\Lambda(\varepsilon)$ sufficiently large.

We use the function $q_h=I_hq$ where
\begin{equation}
q(x) = e^{\lambda |x-\tilde x|^2}-R
\end{equation}
with $\tilde x$ outside $\bar \Omega$ and $R>0$ so that $q < 0$ in $\bar \Omega$.
We define the discrete lower barrier as
\begin{equation}
b_{\varepsilon}^{-}=u_{\varepsilon}+\rho q_h
\end{equation}
where $\rho>0$ so that 
\begin{equation}
b_{\varepsilon}^{-} \le g_n
\end{equation} 
on $\partial \Omega_h$. W.l.o.g. let us assume that
\begin{equation}
T_{\varepsilon}[I_hu_n] \le f_n+\|f-f_n\|_{L^{\infty}(\bar \Omega)}+\frac{1}{n}.
\end{equation}
Choosing $\lambda>0$ sufficiently large we achieve that
\begin{equation}
T_{\varepsilon}[b_{\varepsilon}^{-}] \ge T_{\varepsilon}[I_hu_n]
\end{equation}
and hence
\begin{equation}
b_{\varepsilon}^{-} \le I_hu_n + O(h).
\end{equation}
A similar argument with $b_{\varepsilon}^{+}:= u_{\varepsilon}-\rho q_h$, $\rho>0$ suitable,
results in $b_{\varepsilon}^{+}\ge I_hu_n-O(h)$. 

Clearly, this leads summarized to 
\begin{equation}
|I_hu_n-u_{\varepsilon}| \le -2\rho q_h+O(h).
\end{equation}
Now, choosing $\varepsilon$ (resp. $h$) small, $\Lambda$ sufficiently large, and $\rho$, $\lambda$ suitable (not depending on $h$ or $\Lambda$) we get the desired convergence.
This completes the proof.
\end{proof}




\end{document}